\newtheorem{theorem}{Theorem}[section]
\newtheorem{lem}[theorem]{Lemma}
\newtheorem{prop}[theorem]{Proposition}
\newtheorem{remark}[theorem]{Remark}
\newtheorem{definition}[theorem]{Definition}
\newtheorem{conjecture}[theorem]{Conjecture}
\numberwithin{equation}{section}
\newcommand{\RR}{\mathbb{R}} 
\newcommand{\fH}{\mathcal{H}} %
\newcommand{\EE}{\mathbb{E}} 
\newcommand{\PP}{\mathbb{P}} 
\newcommand{\fF}{\mathcal{F}} 
\newcommand{\1}{\mathbbm{1}} 
\newcommand{\fL}{\mathcal{L}} 
\newcommand{\fD}{\mathcal{D}} 
\newcommand{\fE}{\mathcal{E}}
\newcommand{\Lp}{\pmb{\Pi}} 
\newcommand{\TT}{\mathbb{T}} 
\newcommand{\MM}{\mathbb{M}} 
\newcommand{\fS}{\mathcal{S}} 
\newcommand{\ZZ}{\mathbb{Z}} 
\newcommand{\HH}{\mathbb{H}} 
\newcommand{\FK}{\Gamma L^2}
\newcommand{\fG}{\mathcal{G}}
\newcommand{\NN}{\mathbb{N}}
\newcommand{\fN}{\mathcal{N}}
\newcommand{\bE}{\mathbf{E}}
\newcommand{\fR}{\mathcal{R}}
\newcommand{\fB}{\mathcal{B}}
\newcommand{\BB}{\mathbb{B}}
\title{Fractional stochastic Landau-Lifshitz Navier-Stokes equations in dimension $d \geq 3$: Existence and (non-)triviality}
\author[1]{Ruhong Jin}
\author[2]{Nicolas Perkowski}
\affil[1]{University of Oxford, \href{mailto:ruhong.jin@maths.ox.ac.uk}{ruhong.jin@maths.ox.ac.uk}}
\affil[2]{Institut für Mathematik, Freie Universität Berlin,
\href{mailto:perkowski@math.fu-berlin.de}{perkowski@math.fu-berlin.de}}
\begin{document}
\maketitle

\begin{abstract}

We investigate fractional stochastic  Navier-Stokes equations in $d\ge 3$, driven by the random force $(-\Delta)^{\frac{\theta}{2}}\xi$ which, as we show, corresponds to a fractional version of the Landau-Lifshitz random force in the physics literature. We obtain the existence and uniqueness of martingale solutions on the torus $\mathbb T^d$ for $\theta > \frac{d}{2}$. For $\theta \le 1$ the equation is supercritical and we regularize the problem by introducing a Galerkin approximation and we study the large scale behavior of the truncated model on $\RR^d$. We show that the nonlinear term in the Galerkin approximation vanishes on large scales when $\theta < 1$ and the model converges to the linearized equation. For $\theta = 1$ the nonlinear term gives a nontrivial contribution to the large scale beahvior, and we conjecture that the large scale behavior is given by a linear model with strictly larger effective diffusivity compared to simply dropping the nonlinear term. The effective diffusivity is explicitly given in terms of the model parameters.
\end{abstract}

\section{Introduction}

The Navier-Stokes equation is a fundamental partial differential equation in fluid dynamics. It is given by 
\[
    \partial_t u = \nu\Delta u - \nabla p - \operatorname{div}(u\otimes u) + f,\qquad \nabla \cdot u = 0,
\]
where $\nu$ is the kinematic viscosity, $f$ is an external force and $p$ is the pressure. 
When the force $f$ is random, we will call it the stochastic Navier-Stokes equation, see for example \cite{flandoli1995martingale,flandoli2008markov,zhu2015three} for noise not rougher than space-time white noise. Even rougher noise, say the derivative of space-time white noise, is considered in the recent works \cite{gubinelli2020hyperviscous, cannizzaro2021stationary} for the vorticity formulation of the two-dimensional fractional stochastic Navier-Stokes equation.

Here we consider a fractional stochastic Navier-Stokes equation in dimension $d \geq 3$, which is given by 
\begin{equation}\label{equ.SNS}
    \partial_t u = -(-\Delta)^{\theta} u - \nabla p -\lambda \operatorname{div}(u\otimes u) + \sqrt{2}(-\Delta)^{\frac{\theta}{2}}\xi,\qquad \nabla \cdot u = 0,
\end{equation}
where $\theta >0$ is a positive number and $\xi$ is a white noise with covariance
\[
    \EE[\xi_i(t,x)\xi_j(s,y)] = \delta_{i,j}\delta(t-s)\delta(x-y),\qquad (t,x),(s,y) \in \RR_{+}\times\TT^d.
\]
For $\theta = 1$, we show that this equation is equivalent to the fluctuating hydrodynamics equation of Landau and Lifshitz~\cite{bandak2022dissipation,landau2013fluid},
\begin{equation}\label{equ.fluctuating_hydrodynamics}
    \partial_t u = \nu\Delta u - \nabla p -\hat{\lambda} \operatorname{div}(u\otimes u) +\nabla \cdot\tau,\qquad \nabla \cdot u = 0,
\end{equation}
with noise term given by $\nabla \cdot\tau$, where $\tau$ is a Gaussian field with covariance
\begin{equation}
    \EE[\tau_{ij}(t,x)\tau_{kl}(t',x')] = \frac{2\nu k_B T}{\rho}(\delta_{ik}\delta_{jl}+\delta_{il}\delta_{jk} - \frac{2}{3}\delta_{ij}\delta_{kl})\delta(x-x')\delta(t-t'),
\end{equation}
where $k_B$ is the Boltzmann constant, $T$ is the temperature, and $\rho$ is the density. 

Equations~\eqref{equ.SNS} and~\eqref{equ.fluctuating_hydrodynamics} are scaling supercritical in $d\ge 3$, and therefore they are outside of the scope of regularity structures~\cite{Hairer2014} or paracontrolled distributions~\cite{Gubinelli2015Paracontrolled}, and making rigorous sense of~\eqref{equ.fluctuating_hydrodynamics} is a long-standing problem, with~\cite{gess2023landau} making recent progress on physical interpretations of truncated versions of the equation.

We do not attempt to make sense of~\eqref{equ.fluctuating_hydrodynamics} directly, and we expect that there is no meaningful solution theory for the non-truncated equation. Instead, we consider two simplified problems. First, we study a hyper-viscous version of the equation, in which the diffusion is replaced by $-(-\Delta)^\theta$ with $\theta>\frac{d}{2}$ and simultaneously the noise is replaced by $\sqrt{2(-\Delta)^\theta} \xi$ to guarantee a fluctuation-dissipation relation that preserves the ``energy measure''. Our first result concerns the generator of equation \eqref{equ.main_SNS} and the corresponding martingale problem. 
\begin{theorem}\label{thm:exi_uni}
    Let $\theta>\frac{d}{2}$ and $M>0$ and consider an initial distribution $\nu$ with $\frac{d\nu}{d\mu^M} \in L^2(\mu)$, where $\mu^M$ is the divergence-free and mean-free space white noise on $\TT^d_M = (\RR^d/(M\ZZ))^d$. There exists a unique-in-law energy solution to~\eqref{equ.SNS} with initial distribution $\nu$.
\end{theorem}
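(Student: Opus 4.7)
The plan is to follow the energy solution framework of Gon\c{c}alves--Jara and Gubinelli--Perkowski, adapted to the fractional stochastic Navier--Stokes setting in $d\geq 3$. I begin by introducing a Galerkin truncation $u^N$ obtained by projecting~\eqref{equ.SNS} onto divergence-free Fourier modes with $|k|\leq N$, producing a finite-dimensional SDE with polynomial nonlinearity. The projected divergence-free white noise $\mu^{M,N}$ is invariant under these dynamics: the linear Ornstein--Uhlenbeck part preserves $\mu^{M,N}$ thanks to the fluctuation--dissipation balance between the dissipation $-(-\Delta)^\theta u$ and the noise covariance $(-\Delta)^\theta$, while the Leray-projected nonlinear drift $\Pi\operatorname{div}(u\otimes u)$ is both divergence-free in phase space and $L^2$-orthogonal to $u$ (using $\nabla\cdot u=0$), so it contributes nothing to the Fokker--Planck adjoint acting on the Gaussian density. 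Denoting by $\nu^N$ the projection of $\nu$, a duality/contractivity estimate then yields the uniform-in-$N,t$ density bound $\|d\mathrm{Law}(u^N_t)/d\mu^{M,N}\|_{L^2}\leq \|d\nu/d\mu^M\|_{L^2}$.

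The main analytic tool for the nonlinearity is the It\^o trick: for any mean-zero $F\in L^2(\mu^M)$ in the domain of the symmetric part of the generator,
\begin{equation*}
\EE\left[\sup_{s\leq t}\left|\int_0^s F(u^N_r)\,dr\right|^2\right]\leq Ct\,\|F\|_{-1}^2,
\end{equation*}
where $\|\cdot\|_{-1}$ denotes the dual norm of the Dirichlet form $\fE(G,G)=\int|(-\Delta)^{\theta/2}DG|^2\,d\mu^M$. Applied to the nonlinear test functional $F_\varphi(u)=\langle u\otimes u,\nabla\varphi\rangle$, a computation on the second Wiener chaos produces
\begin{equation*}
\|F_\varphi\|_{-1}^2\;\lesssim\;\sum_n|n|^2|\hat\varphi(n)|^2\sum_{k+l=n}\frac{1}{|k|^{2\theta}+|l|^{2\theta}},
\end{equation*}
and the inner sum is uniformly bounded in $n$ precisely when $\theta>d/2$, which is the theorem's hypothesis. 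Combining this uniform-in-$N$ control of the time-integrated nonlinear drift with standard estimates on the Ornstein--Uhlenbeck part gives tightness of the laws of $u^N$ in a path space such as $C([0,T];H^{-s})$ for $s$ large, and lets me pass to the limit and identify any subsequential limit as an energy solution to~\eqref{equ.SNS} with initial distribution $\nu$.

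For uniqueness I would apply the time-reversal/duality argument of Gubinelli--Perkowski: the $L^2(\mu^M)$-orthogonality and phase-space incompressibility statements above are precisely the condition that the nonlinear drift is antisymmetric with respect to $\mu^M$, so the time-reversed energy solution satisfies the same martingale problem with the sign of the nonlinearity flipped. Combining the forward and backward characterizations through a resolvent identity then determines the transition semigroup on a dense subspace of $L^2(\mu^M)$, hence the law, yielding uniqueness. The main obstacle is the quadratic nonlinear term: establishing $\|F_\varphi\|_{-1}^2<\infty$ at the sharp threshold $\theta>d/2$ together with a stability argument showing that the Galerkin time-integrated nonlinearities converge to a limit consistent with the energy-solution definition. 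Below the threshold the invariant white noise itself is too rough for these estimates to close, and genuinely new ideas (renormalization, regularity structures) would be required to make sense of the nonlinear term.
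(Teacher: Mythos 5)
Your existence argument is essentially the paper's: a Galerkin truncation for which the divergence-free white noise is invariant (via the $L^2$-orthogonality and phase-space incompressibility of the Leray-projected drift, cf.\ Theorem~\ref{thm.solution_truncated}), the It\^o trick applied to the second-chaos functional $F_\varphi$, whose inner sum $\sum_{k+l=n}(|k|^{2\theta}+|l|^{2\theta})^{-1}$ converges exactly for $\theta>\frac d2$, and then tightness and identification of subsequential limits. That half is correct and matches the paper.

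The uniqueness half is where there is a genuine gap. Antisymmetry of the drift with respect to $\mu^M$ and the forward--backward martingale decomposition are necessary but far from sufficient: in the original Gubinelli--Perkowski uniqueness proof for KPZ, the time-reversal structure had to be combined with the Cole--Hopf transform, which has no analogue here. What actually closes the argument (and what the paper does) is the generator/martingale-problem route: one must solve the resolvent equation $(\lambda-\fL)\varphi=F$ for a dense class of $F$, i.e.\ construct a domain on which $\fL=\fL_\theta+\fG$ generates a contraction semigroup. This requires operator estimates for $\fG_\pm$ on the \emph{whole} Fock space, graded in the chaos number and in powers of $(\lambda-\fL_\theta)$, of the form \eqref{eq:neg}--\eqref{eq:pos}: one needs $\tilde\fG_\pm:\fH^{1/2}_1\to\fH^{-1/2}_0$ uniformly in $N$ together with a commutator bound on $[\fN,\fG]$, and the exponent bookkeeping ($\lambda^\zeta_\theta=\frac{d}{4\theta}+\frac12<1$, the interval $(\frac{d}{4\theta},\frac12)$ nonempty) is a second, independent place where the hypothesis $\theta>\frac d2$ enters. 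Your single estimate $\|F_\varphi\|_{-1}<\infty$ controls $\fG_+$ applied to first-chaos elements only; it says nothing about how $\fG_\pm$ act on higher chaoses or how the chaos number grows under iteration of the resolvent, and without these inputs the step ``a resolvent identity determines the transition semigroup'' cannot be carried out. The paper supplies exactly these estimates in Theorem~\ref{thm.geneator_estimation} and then invokes the general uniqueness theorems of Gr\"afner's framework to conclude.
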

Energy solutions are defined in Section~\ref{sec:energy-solutions} below, where we also give the proof of Theorem~\ref{thm:exi_uni}.

For our second result we focus on the case $\theta \le 1$, so that in particular $\theta < \frac{d}{2}$ and we cannot apply the previous result. Therefore, we consider a truncated equation, with the mollifier $\rho^1$ given by $\fF \rho^1 = \1_{B(0,1)}$ for simplicity
\begin{equation}\label{eq:truncated-SNS-intro}
    \partial_t u = -(-\Delta)^{\theta} u -\lambda \rho^1 * \Lp \operatorname{div}((\rho^1*u)\otimes (\rho^1*u)) + \sqrt{2}(-\Delta)^{\frac{\theta}{2}}\Lp\xi,
\end{equation}
now on $\RR_+ \times \RR^d$; here $\Lp$ is the Leray projection that will be introduced below. We drop the condition $\nabla u = 0$ here but should remember that it holds throughout the paper. This is essentially a Galerkin type projection of the equation for $u$, except that we did not mollify the noise. As will become clear from the proof of Theorem~\ref{thm.solution_truncated} below, we could also mollify $\xi$ with $\rho^1$ without affecting the results, but for simplicity we do not do this. Our second main result concerns the large-scale behavior of $u$: With the scaling
\[
    u^N(t,x) = N^{\frac{d}{2}}u(N^{2\theta}t,Nx)
\]
we obtain (with a new space-time white noise that we still denote by $\xi$ for simplicity)
\begin{equation}\label{eq:scaled-NS}
    \partial_t u^N = -(-\Delta)^{\theta} u^N -\lambda N^{2\theta - \frac{d+2}{2}}\rho^N * \Lp \operatorname{div}((\rho^N*u^N)\otimes (\rho^N*u^N)) + \sqrt{2}(-\Delta)^{\frac{\theta}{2}}\Lp\xi,
\end{equation}
where $\rho^N(x) = N^d\rho^1(Nx)$. 
\begin{theorem}\label{thm:main2}
    Let $\theta \le 1$ and $d \ge 3$ and let $u$ be a stationary solution to~\eqref{eq:truncated-SNS-intro}. Then $u^N$ has weakly convergent subsequences in $C(\RR_+, \fS'(\RR^d))$. For $\theta < 1$, the limit $u^\infty$ is unique and it solves
    \[
        \partial_t u^\infty = -(-\Delta)^{\theta} u^\infty  + \sqrt{2}(-\Delta)^{\frac{\theta}{2}}\Lp\xi,
    \]
    i.e. the nonlinearity simply drops out. For $\theta = 1$, \emph{no} limit solves this equation, i.e. the nonlinearity gives rise to a nontrivial contribution in the limit.
\end{theorem}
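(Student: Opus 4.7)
The plan is to follow the Itô-trick strategy for stationary, scaling-supercritical singular SPDEs. First, I would verify that the divergence-free Gaussian white noise $\mu$ on $\RR^d$ is invariant under \eqref{eq:truncated-SNS-intro}: the linear part satisfies a fluctuation--dissipation relation matching $-(-\Delta)^\theta$ against $\sqrt{2}(-\Delta)^{\theta/2}\Lp\xi$, and the truncated nonlinearity $\lambda\rho^1*\Lp\operatorname{div}((\rho^1*u)^{\otimes 2})$ is antisymmetric in $L^2(\mu)$ because $\rho^1$ is real and even, $\Lp$ is self-adjoint, and the incompressibility constraint kills the cubic energy flux. The white-noise-preserving scaling $u\mapsto u^N=N^{d/2}u(N^{2\theta}\cdot,N\cdot)$ preserves $\mu$, so each $u^N$ is stationary with the same marginal. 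Tightness of $(u^N)$ in $C(\RR_+,\fS'(\RR^d))$ follows: the spatial marginal is fixed, and time equicontinuity is obtained from the equation by bounding the linear drift, the martingale and the nonlinear drift separately, the last via the Itô trick explained next.

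For $\theta<1$, let $\varphi\in\fS(\RR^d)$ and observe that $F^N(u):=\langle \lambda N^{2\theta-(d+2)/2}\rho^N*\Lp\operatorname{div}((\rho^N*u)^{\otimes 2}),\varphi\rangle$ lives in the second Wiener chaos of $\xi$. The Gubinelli--Jara Itô trick yields
\[
\EE\!\left[\sup_{t\le T}\left|\int_0^t F^N(u^N_s)\,ds\right|^2\right]\lesssim T\,\|F^N\|_{-1,\theta}^2,
\]
where $\|\cdot\|_{-1,\theta}$ is the negative Sobolev norm for the Ornstein--Uhlenbeck Dirichlet form with rate $|k|^{2\theta}$. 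On the second chaos this norm is explicit in Fourier variables, and a direct computation using $|\hat{\rho^N}|=\1_{|k|\leq N}$ gives
\[
\|F^N\|_{-1,\theta}^2 \;\sim\; N^{2(2\theta-(d+2)/2)} \int_{|k_1|,|k_2|\leq N}\frac{|k_1+k_2|^2\,|\hat\varphi(k_1+k_2)|^2}{|k_1|^{2\theta}+|k_2|^{2\theta}}\,dk_1\,dk_2 \;\lesssim\; N^{2\theta-2},
\]
using $\int_{|k|\leq N}|k|^{-2\theta}dk\sim N^{d-2\theta}$ (valid since $\theta\le 1<d/2$). For $\theta<1$ this vanishes, so the nonlinear drift disappears in the limit and any limit point $u^\infty$ is a stationary solution of the linearized equation. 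The linearized equation admits $\mu$ as its unique stationary law, so the limit is unique.

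For $\theta=1$ the preceding bound is only $O(1)$, and I would prove non-triviality by contradiction. Assume a subsequential limit $u^\infty$ satisfies the linearized equation. For $u^N$ write
\[
\langle u^N(t),\varphi\rangle-\langle u^N(0),\varphi\rangle+\int_0^t\langle u^N_s,(-\Delta)\varphi\rangle\,ds=I^N_t(\varphi)+M^N_t(\varphi),
\]
with $I^N_t(\varphi)=\int_0^t\langle\fN^N(u^N_s),\varphi\rangle\,ds$ in the second chaos and $M^N_t(\varphi)$ a first-chaos martingale whose deterministic quadratic variation converges to $2t\|(-\Delta)^{1/2}\Lp\varphi\|_{L^2}^2$. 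If $u^\infty$ solved the linear equation, the same identity with $N=\infty$ would force $I^\infty_t(\varphi)\equiv 0$. A Green--Kubo computation for the stationary process $u^N$, however, shows that $\lim_{N\to\infty}t^{-1}\EE[(I^N_t(\varphi))^2]$ exists and equals a strictly positive bilinear form $2D_{\mathrm{eff}}(\varphi)$, obtained by applying the resolvent of the linearized generator to the kernel of $\fN^N$ and letting $N\to\infty$. Since $I^N$ and $M^N$ sit in orthogonal Wiener chaoses, $I^\infty\not\equiv 0$, a contradiction. The main obstacle is exactly this matching \emph{lower bound}: upgrading the $O(1)$ Itô-trick upper bound to a strictly positive limit $D_{\mathrm{eff}}(\varphi)>0$ requires careful tracking of the Leray projector and the support constraints from $\rho^N$, ruling out any hidden cancellation that would reduce the effective diffusivity to zero at the critical exponent $\theta=1$.
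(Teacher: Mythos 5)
Your tightness argument and your treatment of $\theta<1$ match the paper: the same It\^o-trick bound in the $\fH^{-1/2}_0$-type norm, the same Fourier computation giving the rate $N^{2\theta-2}$, and the same conclusion that the nonlinear drift vanishes and the limit solves the linear equation (your uniqueness one-liner conflates uniqueness of the invariant measure with uniqueness in law of the limit process, but this is easily repaired by testing against the fractional heat semigroup, as the paper does).

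The gap is in the $\theta=1$ case, which is the substantive half of the theorem. The Green--Kubo identity you invoke reads
\[
\int_0^\infty e^{-\lambda t}\,\bE\Big[\Big|\int_0^t B^N(u^N_s)(\varphi)\,ds\Big|^2\Big]\,dt \;=\; \frac{2}{\lambda^2}\,\big\langle \fG^N_+\varphi,\,(\lambda-\fL^N)^{-1}\fG^N_+\varphi\big\rangle,
\]
with the resolvent of the \emph{full} generator $\fL^N=\fL_1+\fG^N$, not of the linearized one as you write. Since $\fG^N$ is an $O(1)$, unbounded, antisymmetric perturbation at $\theta=1$, and antisymmetric parts generically \emph{decrease} such quadratic forms, there is no a priori reason this quantity is bounded below by the corresponding linearized expression; this is precisely the ``hidden cancellation'' you name but do not rule out, and it is the entire content of the non-triviality statement. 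The paper closes this gap by (i) the variational formula
$\sup_G\{2\langle\fG^N_+\varphi,G\rangle-\langle(\lambda-\fL_1)G,G\rangle-\langle\fG^N G,(\lambda-\fL_1)^{-1}\fG^N G\rangle\}$ restricted to $G$ in the second chaos, (ii) the uniform-in-$N$ operator bounds of Theorem 3.6, which give $\|(\lambda-\fL_1)^{-1/2}\fG^N_\pm G\|^2\le C\|(\lambda-\fL_1)^{1/2}G\|^2$ there, (iii) the explicit choice $G=\delta(\lambda-\fL_1)^{-1}\fG^N_+\varphi$ with $\delta$ small, which yields the lower bound $c\,\|(\lambda-\fL_1)^{-1/2}\fG^N_+\varphi\|^2$, and (iv) an explicit Fourier computation showing this last quantity converges to a strictly positive limit. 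Without steps (i)--(iii) your argument only produces the upper bound you already have; and your final ``orthogonal Wiener chaoses'' contradiction is also fragile, since the chaos decomposition is not obviously preserved under the weak limit --- the paper instead transfers the two-sided variance bounds directly to the limiting drift $\fB(\varphi)$.
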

This result is shown in Section~\ref{sec:large-scale}, where we conjecture that for $\theta=1$ the limit solves a linear equation, with explicitly given effective diffusivity $>1$. For the mollified Landau-Lifshitz equation~\eqref{equ.fluctuating_hydrodynamics} we would obtain the following linear equation for the limit
\begin{equation*}
    \partial_t u = G(\hat \lambda) \nu \Delta u + \sqrt{G(\hat \lambda)}\sqrt{\frac{2\nu k_B T}{\rho}} \Lp(-\Delta)^{\frac{1}{2}}\xi,
\end{equation*}
where 
\[
    G(\hat{\lambda}) = \sqrt{1 + \frac{\hat{\lambda}^2k_BT\omega_d}{4\nu^2\rho\pi^2(d-2)}},
\]
with an explicit constant $\omega_d$ that depends on the specific mollifier, see Section~\ref{sec:large-scale} for details.

Let us comment briefly on our methods. For Theorem~\ref{thm:exi_uni} we use the energy solution approach introduced and developed in the works \cite{Goncalves2014, gubinelli2013regularization,gubinelli2020infinitesimal,gubinelli2018energy}. This approach is generalized in~\cite{grafner2023energy} to provide a general framework to make sense of the infinitesimal generator of stochastic Burgers and other types of singular stochastic partial differential equations. The crucial feature of the equation which makes this approach works is that it contains a good symmetric part and the singularity is contained in the anti-symmetric part, and this allows us to apply the results of~\cite{grafner2023energy}. In fractional problems like the ones we study here it is expected that there is an ``energy critical exponent'' $\theta$ above which the energy solution approach works, but below which the theory does not give any information. This parameter does not have to be equal to the scaling critical parameter, and in fact here it is $\theta =\frac{d}{2}$, while the scaling critical parameter is $\theta = \frac{d+2}{4}$. For $d \ge 3$, the energy critical exponent is thus bigger than the scaling critical exponent, and it is not well understood what happens in between (or in fact for $\theta \in (1,\frac{d}{2}]$).



For Theorem~\ref{thm:main2} we follow the approach of~\cite{cannizzaro20212d,cannizzaro2021stationary}. In particular, the approach and results are almost the same as the ones in~\cite{cannizzaro2021stationary} for the vorticity formulation of the 2d stochastic Navier-Stokes equation, and we employ their method for the stochastic Navier-Stokes equation in all dimension $d\geq 3$.


While the mathematical methods for both our main results existed previously, we believe that our main contribution is the observation that the stochastic Navier-Stokes equation has an invariant energy measure in all dimensions, and that this allows to study Landau-Lifshitz type dynamics with energy solution methods.


\section{Notation and preliminaries}
For $M > 0$ we will use $\MM_M$ to denote the torus $\TT_M^d=(\RR/(M\ZZ))^d$ of length  $M$. When $M = \infty$, we interpret $\MM_\infty$ as the space $\RR^d$. When $M$ is clear from context, we will drop the index $M$ and simply write $\MM$. We write $\mathcal S(\MM_M)$ for $C^\infty(\MM_M)$ if $M<\infty$, and for the Schwartz functions on $\mathbb R^d$ if $M=\infty$, and $\mathcal S'(\MM_M)$ are the (Schwartz) distributions on $\MM_M$. We will write $\fS(\MM, \RR^d)$ and $\fS'(\MM,\RR^d)$ to denote vector-valued test functions and distributions; if it is clear from context we may also omit $\RR^d$ and simply write $\fS(\MM)$ or $\fS'(\MM)$ even in the vector-valued case.
    
    \subsection{Fourier Analysis}
    For $M > 0$ and $\varphi \in \fS(\MM_M)$ we define the Fourier transform at $k \in \ZZ_M^d=\left(\frac{1}{M}\ZZ\right)^d$ (with $\ZZ_\infty^d = \RR^d$) with the normalization
    \[
        \fF(\varphi)(k) = \hat{\varphi}(k) := \int_{\MM}\varphi(x)e_{-k}(x)dx,
    \]
    where $e_k(x) = e^{2\pi\iota kx}$. The inverse Fourier transformation is
    \[
        \varphi(x) = \frac{1}{M^d}\sum_{k\in\ZZ_M^d}\hat{\varphi}(k)e_k(x),
    \]
    where the sum becomes an integral for $M=\infty$, and Parseval's identity is
    \[
        \langle\varphi,\psi\rangle = \frac{1}{M^{d}}\sum_{k\in\ZZ_M^d}\hat{\varphi}(k)\hat{\psi}(-k).
    \]
    The convolution of two functions $\varphi,\psi$ on the space $\MM_M$ is defined by 
    \[
        \varphi *_M \psi(x) := \int_{\MM_M} \varphi(y)\psi(x-y)dy,
    \]
    and the convolution of two functions on $\ZZ_M^d$ is defined by 
    \[
        \hat{\varphi}*_M\hat{\psi}(k) := \frac{1}{M^d}\sum_{k\in\ZZ_M^d} \hat{\varphi}(l)\hat{\psi}(k-l),
    \]
    so that 
    \[
        \widehat{\varphi*_M\psi}(k) = \hat{\varphi}(k)\hat{\psi}(k),\qquad \widehat{\varphi\psi}(k) = \hat{\varphi}*_M\hat{\psi}(k).
    \]
    Derivatives and fractional Laplacian can be expressed as Fourier multipliers:
    \[
        \widehat{\partial_i\varphi}(k) = 2\pi\iota k_i\hat{\varphi}(k),\qquad  \widehat{(-\Delta)^\theta\varphi}(k) =(2\pi|k|)^{2\theta}\hat{\varphi}(k),
    \]
    and the Leray projection $\Lp$ maps vectors $\varphi \in \fS(\MM,\RR^d)$ to divergence free vectors,
    \[
        \widehat{\Lp(\varphi)}_i(k) = \sum_{j}\widehat{\Lp}_{ij}(k)\hat{\varphi_j}(k),
    \]
    where
    \begin{equation}\label{equ.Leray_multiplier}
        \widehat{\Lp}_{ij} = \delta_{ij} - \frac{k_i k_j}{|k|^2}, \qquad 1 \leq i,j\leq n,
    \end{equation}
    with the convention $\widehat{\Lp}_{ij}(0)=\delta_{ij}$. 
    
    \subsection{Gaussian Analysis}
    We consider a mean-free and divergence-free white noise: For $M<\infty$, $\mu^M$ is an $\fS'(\MM,\RR^d)$-valued random variable such that $\mu^M(f)$ is centered Gaussian for all $f\in \fS(\MM,\RR^d)$, with covariance
    \[
        \EE[\mu^M(f_1)\mu^M(f_2)] = \left\langle \Lp\left(f_1 - \fint f_1 dx\right),\Lp\left(f_2 - \fint f_2dx\right)\right\rangle_{L^2(\MM,\RR^d)},
    \]
    for all $f_1,f_2 \in \fS(\MM,\RR^d)$, where $\fint_{\MM_M}\dots dx = \frac{1}{M^d} \int_{\MM_M} \dots dx$. It can be seen that the measure induced on $\fS'$, which we again noted as $\mu^M$, is concentrated on the divergence-free and mean-free fields. We can also consider the Hilbert space
    \[
        \HH_M :=\left\{\varphi \in L^2(\{1,\dots,d\}\times \MM_M): \operatorname{div} (\varphi) = 0, \fint_{\MM}\varphi dx = 0\right\},
    \]
     equipped with the norm $\|h\|^2_{\HH_M} = \sum_{i=1}^d \int_{\MM_M} |h(x)|^2 dx$, so that $\mu^M$ is a white noise on $\HH_M$ with 
    \[
        \EE[\mu^M(h_1)\mu^M(h_2)] = \langle h_1,h_2\rangle_{\HH_M},
    \]
    for $h_1,h_2 \in \HH_M$. For $M=\infty$ we also write $\mu := \mu^\infty$ for the divergence-free white noise on $\RR^d$, i.e. the covariance is
    \[
        \EE[\mu^M(f_1)\mu^M(f_2)] = \left\langle \Lp f_1,\Lp f_2\right\rangle_{L^2(\MM,\RR^d)}.
    \]
    We write $L^2(\fS'(\MM),\mu^M)$ (in short $L^2(\mu^M)$) for the space of all square-integrable random variables with respect to measure $\mu^M$. The $n-$th Wiener Chaos is defined via 
    \[
        \mathscr{H}_{M,n} := \overline{\text{span}\{H_n(\mu^M(h)): h \in \HH_M, \|h\| = 1\}},
    \]
    where $H_n$ are the Hermite polynomials such that $H_0=1$, $H'_n = nH_{n-1}$ and $\EE[H_n(X)] = 0$, $n\ge 1$, for a standard normal random variable $X$. When no confusion arises, we drop the index $M$ for simplicity. 
    
    For all non-negative integers $n$, let $\FK_n$ be the symmetric subspace of $\HH^{\otimes n}$. The Fock space $\Gamma L^2 = \bigoplus_{n=0}^\infty \Gamma L_n^2$ consists of symmetric functions with finite norm defined by 
    \[
        \|\varphi\|_{\FK}^2 := \sum_{n=0}^\infty n!\|\varphi_n\|_{\HH^{\otimes n}}^2,
    \]
    for $\varphi = (\varphi_n)_{n\geq 0}$ with $\|h^{\otimes n}\|_{\HH^{\otimes n}} := \|h\|_{\HH}^n$. The space $\FK$ can be described as 
    \[
        \FK := \{(\varphi_n)_{n\geq 0}: \varphi_n\in\HH^{\otimes n}, \varphi_n \text{ symmetric and }\|\varphi\|_{\FK} < \infty\}.
    \]
    For convenience, we also may say that a non-symmetric function is in the Fock space, provided its symmetrization is in the Fock space. For any $f \in L^2(\mu)$, we have the chaos decomposition 
    \[
        f = \sum_{n=0}^\infty I_n(\varphi_n),\qquad \varphi = (\varphi_n)_n \in \FK,
    \]
    where $I_n$ is the continuous linear map map from the symmetric subspace of $\HH^{\otimes n}$ to $L^2(\mu)$ with $I_n(h^{\otimes n}):=H_n(\mu(h))$ for $h \in \HH$ and $\|h\|_{\HH} = 1$. Furthermore, the identity 
    \[
        \|f\|_{L^2(\mu)} = \|\varphi\|_{\FK}
    \]
    holds. Another tool we need is the identity 
    \begin{equation}
        I_p(\varphi_p)I_q(\varphi_q) = \sum_{r=0}^{p\wedge q}r!\binom{p}{r}\binom{q}{r} I_{p+q-2r}(\widetilde{\varphi_p\otimes_{r}\varphi_q}),
    \end{equation}
    where $\tilde{\cdot}$ is the symmetrization operator and 
    \begin{align*}
        &\varphi_p\otimes_r \varphi_q((i_{1},x_{1}),\dots,(i_{p+q-2r},x_{p+q-2r})) \\
        &\hspace{5pt}= \langle \varphi_p((i_1,x_1),\dots,(i_{p-r},x_{p-r}),\cdot), \varphi_q((i_{p-r+1},x_{p-r+1}),\dots,(i_{p+q-2r},x_{p+q-2r}),\cdot)\rangle_{\HH^{\otimes r}}. 
    \end{align*}
    We will also need ``Sobolev-like'' spaces $\fH^{\alpha}_\beta$ defined by 
    \[
        \fH^{\alpha}_\beta := (1+\fN)^{-\beta}(1-\fL_\theta)^{-\alpha}\FK,
    \]
    where the number operator $\fN$ maps $\varphi = (\varphi_n)_n$ to $\fN \varphi = (n \varphi_n)_n$, and where $\fL_\theta$ is defined in equation \eqref{equ.geneator_Ltheta_cylindrical} below. For $\theta = 1$ and $\varphi \in \FK_1$, this  is exactly the inhomogeneous $H^\alpha$ Sobolev norm.
    
\section{The infinitesimal generator of the truncated equation}
Our goal is to solve the following stochastic Navier-Stokes Equation on the space $\MM$:
    \begin{equation}\label{equ.stochastic_NS}
        \left\{
        \begin{array}{ll}
             \partial_t u = -(-\Delta)^\theta u - \lambda\Lp(\operatorname{div}(u\otimes u)) + \sqrt{2}(-\Delta)^{\frac{\theta}{2}}\Lp\xi,&  \text{ on }\RR^+\times \MM,\\
             \operatorname{div}(u) = 0, & \text{ on }\RR^+\times \MM.
        \end{array}
        \right.
    \end{equation}
    Here $u:\MM\rightarrow \RR^d$ is a divergence-free vector function, $\xi = (\xi_1,\dots,\xi_d)$ are independent space-time white noises on $\{1,\dots,d\}\times \MM$, and we recall that $\Lp$ is the Leray projector.

    We start by considering the following approximation: Let $\chi$ be a smooth, even function  on the whole space $\RR^d$ with support on the ball $B(0,1)$ and consider the kernel $\rho^{MN}$ with Fourier transform $\hat \rho^{MN}(k) = \chi(k)$ for $k \in \ZZ^d_{MN}$ (where $MN$ is the product $M\cdot N$). Let $u^{MN}$  be the divergence-free solution to the equation
    \begin{equation}
        \partial_t u^{MN} = -A^\theta u^{MN} - \lambda\rho^{MN}*_{MN}\Lp(\operatorname{div}((\rho^{MN}*_{MN} u)\otimes(\rho^{MN}*_{MN} u))) + \sqrt{2}A^{\frac{\theta}{2}}\Lp\xi,
    \end{equation}
    on $\RR^+\times \MM_{MN}$, for divergence-free initial condition $u^M_0$. With the same scaling as in~\eqref{eq:scaled-NS} we see that $u^{N,M}:=N^{\frac{d}{2}}u^{MN}(N^{2\theta}t,Nx)$ solves
    \begin{equation}\label{equ.Stochastic_NS_torus_truncated}
        \partial_tu^{N,M} = -A^\theta u^{N,M} - \lambda_{N,\theta}B^{N,M}(u^{N,M}) + \sqrt{2}A^{\frac{\theta}{2}}\Lp\xi,
    \end{equation}
    on $\RR^+\times \MM_M$ with divergence-free initial condition $u^{N,M}(0) = u_0^{N,M}$, where $A = -\Delta$, $\lambda_{N,\theta} = \lambda N^{2\theta - \frac{d+2}{2}}$ and 
    \begin{equation}
        B^{N,M}(u) = \rho^{N,M}*_M\Lp(\operatorname{div}((\rho^{N,M}*_Mu)\otimes(\rho^{N,M}*_Mu))),
    \end{equation}
    where $\widehat{\rho^{N,M}}(k) = \chi(\frac{k}{N})$ on $\ZZ_M^d$. Next, let us define solutions to  equation \eqref{equ.Stochastic_NS_torus_truncated}. 
    \begin{definition}\label{def.SNS_weak}
        A divergence-free stochastic process $u^{N,M} \in C(\RR,\fS'(\MM,\RR^d))$ is called a strong solution to $\eqref{equ.Stochastic_NS_torus_truncated}$  with $\lambda_{N,\theta}$ replaced by $\hat{\lambda} > 0$, if for all test function $\varphi \in \fS(\MM_M, \RR^d)$, we have 
        \begin{equation}\label{equ.SNS_trunc_strong_sol}
            u^{N,M}_t(\varphi) = u_0^{N,M}(\varphi) - \int_0^t u^{N,M}_s(A^\theta\varphi)ds - \hat{\lambda}\int_0^tB^{N,M}(u^{N,M}_s)(\varphi)ds + M_t^{\varphi},
        \end{equation}
        where $M_t^\varphi = \sqrt{2}\xi(\1_{[0,t]}\otimes A^{\frac\theta2} \Lp \varphi)$ is a continuous martingale with quadratic variation 
        \[
            [M^\varphi]_t = 2t\|A^{\frac{\theta}{2}}\Lp\varphi\|_{\HH_M}^2.
        \]
    \end{definition}
    \begin{remark}
        We may consider Schwartz functions as test functions. However, since the solution $u^{N,M}$ is divergence-free, we actually have
        \[
            u^{N,M}_t(\varphi) = (\Lp u^{N,M}_t)(\varphi) = u^{N,M}_t(\Lp\varphi).
        \]
        Thus we can restrict to divergence-free test functions. Furthermore, when $M<\infty$, the $0-$th Fourier coefficient does not change in time. Thus, for convenience, we will also restrict the test function to have average $0$ when $M$ is finite. We will write $\fS_f(\MM)$ and $\fS_f'(\MM)$ to indicate that we choose the divergence-free functions.
    \end{remark}

    To start with, let us give an orthonormal basis of $\HH_M$.
    \begin{prop}\label{lem.Leray_proj}
        Let $d$ be the dimension and let $M<\infty$. Then for each Fourier mode $k \in \ZZ^d_M\setminus\{0\}$, there are exactly $d-1$ vectors $(\tilde{e}_k^i)_{1 \leq i\leq d-1}$ such that $\{\tilde{e}_k^i\}_{k\neq 0, i=0,\dots, d-1}$ forms an orthogonal basis of $\HH_M$.
    \end{prop}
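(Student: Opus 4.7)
The plan is to produce the basis explicitly via Fourier analysis. The claim is essentially the statement that the Leray projection cuts a one-dimensional subspace from each Fourier mode, so that what remains at each $k\neq 0$ is a $(d-1)$-dimensional hyperplane; making this explicit on the real Hilbert space $\HH_M$ is just bookkeeping. First I would characterize $\HH_M$ spectrally: a real vector field $\varphi\in L^2(\TT^d_M,\RR^d)$ has Fourier coefficients $\hat\varphi(k)\in\mathbb{C}^d$ obeying the reality relation $\hat\varphi(-k)=\overline{\hat\varphi(k)}$, the mean-free constraint $\hat\varphi(0)=0$, and, by the formula $\widehat{\operatorname{div}\varphi}(k)=2\pi\iota\, k\cdot\hat\varphi(k)$, the divergence-free constraint $k\cdot\hat\varphi(k)=0$ for all $k$. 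Thus at each nonzero $k$ the admissible coefficient lies in the $(d-1)$-dimensional complex subspace $\{w\in\mathbb{C}^d:k\cdot w=0\}$.

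Second I would construct basis vectors mode by mode. For each $k\in\ZZ^d_M\setminus\{0\}$ pick a real orthonormal basis $v_k^1,\dots,v_k^{d-1}$ of the hyperplane $k^\perp=\{v\in\RR^d:k\cdot v=0\}$, with $v_{-k}^i:=v_k^i$ (consistent since $k^\perp=(-k)^\perp$). Split $\ZZ^d_M\setminus\{0\}$ into a ``positive'' half $\Lambda^+$ and its negative $-\Lambda^+$, and for $k\in\Lambda^+$ and $1\le i\le d-1$ set
\[
    \tilde e^{i,c}_k(x):=\frac{\sqrt 2}{M^{d/2}}\,v_k^i\cos(2\pi k\cdot x),\qquad \tilde e^{i,s}_k(x):=\frac{\sqrt 2}{M^{d/2}}\,v_k^i\sin(2\pi k\cdot x).
\]
Each is real-valued and mean-free; it is divergence-free because $\operatorname{div}(v_k^i\cos(2\pi k\cdot x))=-2\pi(k\cdot v_k^i)\sin(2\pi k\cdot x)=0$, and similarly for the sine version. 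Orthonormality in $\HH_M$ then reduces to the orthonormality of $\{v_k^i\}_i$ in $\RR^d$ combined with the standard $L^2$-orthogonality of the trigonometric system $\{\cos(2\pi k\cdot x),\sin(2\pi k\cdot x)\}_{k\in\Lambda^+}$ on $\TT^d_M$. Finally, re-indexing the $2(d-1)$ functions attached to each pair $\{k,-k\}$ as $d-1$ functions $\tilde e^i_k$ per $k\in\ZZ^d_M\setminus\{0\}$ (using the cosine functions for $k\in\Lambda^+$ and the sine functions for $k\in-\Lambda^+$, say) produces the desired enumeration with exactly $d-1$ vectors per nonzero Fourier mode.

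For completeness I would invoke Parseval for the full Fourier basis of $L^2(\TT^d_M,\RR^d)$: any $\varphi\in\HH_M$ has $\hat\varphi(k)\in\operatorname{span}_{\mathbb{C}}(v_k^1,\dots,v_k^{d-1})$, and regrouping the $k$- and $-k$-contributions via the reality condition expresses $\varphi$ as a real $L^2$-convergent combination of the $\tilde e^{i,c}_k$ and $\tilde e^{i,s}_k$, with $\|\varphi\|_{\HH_M}^2$ equal to the sum of squared coefficients. There is no substantive mathematical obstacle here; the only point of care is getting the real structure right (so that the basis is real rather than complex) and matching the index counts (the statement writes $i=0,\dots,d-1$ in the last display but the preceding clause says ``exactly $d-1$ vectors'', so I read the index range as $1,\dots,d-1$, consistent with the construction).
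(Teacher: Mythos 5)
Your proof is correct, but it takes a genuinely different route from the paper. The paper starts from the standard complex Fourier basis $e_k^i$, applies the Leray projection, and reduces the claim to computing the rank of the multiplier matrix $\widehat{\Lp}(k) = |k|^2 I_d - (k_ik_j)_{i,j}$ at each fixed $k \neq 0$; the rank is shown to be $d-1$ by proving, via Cauchy--Schwarz, that the eigenspace of $(k_ik_j)_{i,j}$ for the eigenvalue $|k|^2$ is exactly the line $\RR k$. You instead characterize $\HH_M$ spectrally through the constraint $k\cdot\hat\varphi(k)=0$ and then build an explicit real orthonormal basis out of sine and cosine fields with polarization vectors spanning $k^\perp$. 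The two arguments hinge on the same linear-algebra fact (the divergence-free constraint removes exactly the direction $k$ at each mode), but yours is more constructive and more careful about the real structure of $\HH_M$ and the $k \leftrightarrow -k$ pairing, which the paper's proof passes over; the paper's version is shorter and directly identifies the basis elements as Leray projections of the standard Fourier modes, which is the form in which they are used later (as test functions and for the spectral decomposition $u^N = v^N + Z^N$). Either argument establishes the proposition; your reading of the index range as $1,\dots,d-1$ (against the typo $i=0,\dots,d-1$ in the statement) is the intended one.
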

    \begin{proof}
        We start with the $L^2$ Fourier basis: 
        \[
            e_k^i(z):=\{(e_k(z)\delta_{ij})_{j = 1,\dots,d}\}_{0\neq k\in\ZZ_M^d, i = 1,\dots,d},
        \]
        with Fourier transformation
        \[
            \delta_k^i(\bar{k}):=\{(\delta_k(\bar{k})\delta_{ij})_{j = 1,\dots,d}\}_{0\neq k\in\ZZ_M^d, i = 1,\dots,d}.
        \]
        The Fourier transform of the Leray projection of $e_k^i$ is given by 
        \[
            \left(\widehat{\Lp}\delta^i_k\right)(\bar{k}):= (\widehat{\Lp}_{ij}\delta^i_k)_{j=1,\dots,d}(\bar{k})=\left(\delta_{ij} - \frac{\bar{k}_i\bar{k}_j}{|k|^2}\right)_{j=1,\dots,d}\delta_k(\bar{k})
        \]
        This means that the Leray projections for different $k\in\ZZ_M^d$ are linearly independent. We now go to check the case when $k$ is the same but $i$ is different. We only need to compute the rank of the matrix $\widehat{\Lp}(k)$ for any fixed non-zero $k$, where
        \[
            \widehat{\Lp}(k) = \left(
            \begin{array}{cccc}
                |k|^2 - k_1^2 & -k_1k_2 & \dots & -k_1k_d \\
                - k_1k_2 & |k|^2 -k_2^2 & \dots & -k_2k_d \\
                \vdots&\vdots&\ddots&\vdots \\
                -k_dk_1 & -k_dk_2 & \dots & |k|^2-k_d^2
            \end{array}
            \right) = |k|^2 I_d - (k_ik_j)_{i,j}.
        \]
        To show the rank is $d-1$, we only need to show that $(k_ik_j)_{i,j}$ has a one-dimensional eigenspace with respect to the eigenvalue $|k|^2$. One eigenvector for $|k|^2$ is $k$. If $(x_j)$ is another eigenvector for $|k|^2$, then we can take the inner product with $x$ in the equality $k_i \sum_j k_j x_j = |k|^2 x_i$, $i=1,\dots,d$, and we obtain
        \[
            |k|^2|x|^2 = \left(\sum_j x_jk_j\right)^2.
        \]
        By the Cauchy-Schwarz inequality, we know that $x$ is a multiple of $k$ and therefore the dimension of the eigenspace is exactly one. Thus we proved the proposition.
    \end{proof}
    Now we can prove the existence and uniqueness of strong solutions to the truncated equation:

    \begin{theorem}\label{thm.solution_truncated}
        For $M< \infty$, equation $\eqref{equ.Stochastic_NS_torus_truncated}$ admits a unique strong solution $u^{N,M} \in C(\RR_+,\BB^{-\frac{d}{2}-}_{2,2})$ for any divergence-free initial condition $u_0\in\BB^{-\frac{d}{2}-}_{2,2}$, where 
        \[
            \BB^{-\frac{d}{2}-}_{2,2} = \bigcap_{\alpha < -\frac{d}{2}} \BB^{\alpha}_{2,2}.
        \]
        Furthermore, the solution is a strong Markov process and $\mu^M$ is an invariant measure.
    \end{theorem}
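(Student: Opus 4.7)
The strategy rests on the observation that $\hat{\rho}^{N,M}(k)=\chi(k/N)$ has compact Fourier support inside $\{|k|\le N\}$, so that $B^{N,M}(u)$ depends on $u$ only through the finitely many low modes $\{\hat{u}(k):|k|\le N\}$ and produces an output Fourier-supported in the same set. Equation~\eqref{equ.Stochastic_NS_torus_truncated} therefore splits orthogonally into a nonlinear finite-dimensional It\^o SDE for the low modes (living on the subspace spanned by the Leray basis from Proposition~\ref{lem.Leray_proj}) and a countable family of independent scalar complex Ornstein--Uhlenbeck equations for the high modes $|k|>N$, the latter being completely decoupled from the nonlinearity.

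The high-mode OU system trivially admits unique strong global solutions whose stationary law is the projection of $\mu^M$ onto the high modes. For the low modes the drift is polynomial and the diffusion matrix constant, so local strong existence and uniqueness is classical; non-explosion follows from the Navier--Stokes energy cancellation: writing $v=\rho^{N,M}*_M u$, which is divergence-free whenever $u$ is,
\[
  \langle B^{N,M}(u),u\rangle_{L^2} = \langle \operatorname{div}(v\otimes v),v\rangle_{L^2} = -\tfrac12\int_{\MM_M}(\operatorname{div} v)|v|^2\,dx = 0,
\]
so It\^o's formula applied to the low-mode energy gives only a linear-in-time bound, ruling out blow-up. Gluing the two pieces yields a unique pathwise strong solution in the sense of Definition~\ref{def.SNS_weak}. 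Membership in $C(\RR_+,\BB^{-\frac{d}{2}-}_{2,2})$ is preserved because the low modes evolve as smooth trigonometric polynomials while the high-mode OU contribution has time-uniform Gaussian tails placing it in the Besov space of white noise; pathwise uniqueness and time-homogeneity then yield the strong Markov property by a standard argument.

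For invariance of $\mu^M$, use the factorization $\mu^M=\mu^M_{\le N}\otimes\mu^M_{>N}$ along the high/low decomposition of $\HH_M$. The high-mode factor is invariant for the decoupled OU dynamics. On the low modes the generator splits as $\fL=\fL_{\text{sym}}+\fL_{\text{asym}}$, where $\fL_{\text{sym}}$ is the OU generator (for which $\mu^M_{\le N}$ is reversible) and $\fL_{\text{asym}}\varphi(u)=-\lambda_{N,\theta}\langle B^{N,M}(u),D\varphi(u)\rangle$. Finite-dimensional Gaussian integration by parts gives
\[
  \int \fL_{\text{asym}}\varphi\, d\mu^M_{\le N} = -\lambda_{N,\theta}\int \varphi(u)\bigl[\langle B^{N,M}(u),u\rangle-\operatorname{div}_u B^{N,M}(u)\bigr]d\mu^M_{\le N}(u),
\]
so invariance reduces to the vanishing of the bracket: the first term is zero by the energy identity above, and the second is the Liouville property $\operatorname{div}_u B^{N,M}(u)=0$. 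The latter---the main obstacle and essentially the only nontrivial step---follows from a direct Fourier computation exploiting the identity $\sum_a p_a\hat{\Lp}_{ab}(p)=0$ (divergence-freeness of the Leray range) together with the symmetry of the convolution sum $p+q=k$; this is the classical phase-space volume-preservation of Galerkin-truncated Navier--Stokes, and once verified, invariance of $\mu^M$ is immediate.
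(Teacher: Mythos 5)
Your proof is correct, and the existence/uniqueness half is essentially the paper's argument: the same orthogonal splitting into a finite-dimensional nonlinear SDE on the low modes (spanned by the Leray basis of Proposition~\ref{lem.Leray_proj}) and a decoupled Ornstein--Uhlenbeck system on the high modes, with non-explosion coming from the cancellation $\langle B^{N,M}(u),u\rangle=0$ (the paper phrases this as weak coercivity, and your sign in $\pm\tfrac12\int(\operatorname{div}v)|v|^2$ is off, harmlessly). Where you genuinely diverge is the invariance of $\mu^M$. The paper postpones this step until after Theorem~\ref{thm.Geneator_in_Fourier}: it computes the Wiener--chaos representation of the generator, proves that $\tilde\fG^{N}$ is anti-symmetric on $L^2(\mu^M)$, and then applies Echeverria's criterion to the projections $\Pi_K u^{N}$ to pass from $\EE_\mu[\fL^N F]=0$ to invariance. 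You instead work directly on the low-mode marginal with finite-dimensional Gaussian integration by parts, reducing invariance to the pointwise identity $\langle B^{N,M}(u),u\rangle-\operatorname{div}_u B^{N,M}(u)=0$, i.e.\ the energy cancellation plus the classical Liouville property of the Galerkin-truncated nonlinearity --- which is precisely the alternative the paper mentions in one sentence (``directly use the divergence-free property of $B^N$ when expressed in Fourier modes'') but does not carry out. Your route is more elementary and self-contained: no chaos decomposition and no appeal to Echeverria are needed, since after the splitting the problem is genuinely finite-dimensional. The paper's route buys the reuse of Theorem~\ref{thm.Geneator_in_Fourier}, which it needs anyway for the energy-solution theory, and records the cancellation in the form (anti-symmetry of $\tilde\fG^N$ in $L^2(\mu^M)$) in which it is exploited for the rest of the article. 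The only place where your sketch should be expanded is the Liouville step itself: when passing to real coordinates respecting the conjugacy $\hat u(-k)=\overline{\hat u(k)}$, the phase-space divergence reduces to $2\operatorname{Re}\sum_{k,i}\partial\hat B_i(k)/\partial\hat u_i(k)$, and each diagonal derivative pairs the remaining factor of the convolution with $\hat u(0)=0$; so mean-freeness, not only the Leray identity $\sum_a k_a\widehat{\Lp}_{ab}(k)=0$, is what makes the diagonal vanish.
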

    
    \begin{proof}
        In this proof we drop the superscript $\cdot^M$. We can follow Section 4 of \cite{gubinelli2013regularization} or Proposition 2.1 of \cite{cannizzaro2021stationary}. As indicated by Proposition \ref{lem.Leray_proj}, instead of considering Fourier series, we take the orthogonal basis of $\HH$ as test functions, i.e. $\tilde{e}_k^i$ in Proposition \ref{lem.Leray_proj}. We can then decompose $u^{N} = v^{N} + Z^{N}$ where $v^{N}$ and $Z^{N}$ are independent with disjoint spectral support. Indeed, for the decomposition we take $V^N:= \{0\neq k\in\ZZ_M^d: \frac{k}{N}\in \operatorname{supp}(\chi)\}$ and we let $v^N$ be the orthogonal projection of $u^{N}$ to the space generated by $\{\tilde{e}_k^i\}_{k\in V^N, i=0,\dots, d-1}$.
        
        Furthermore, $Z^{N}$ is the Leray projection of the infinite-dimensional Ornstein-Uhlen\-beck process on the Fourier modes $U^{N}: = \{k \in \ZZ_M^d: \fF(\rho^{N,M})(k) = 0\}$, which is a strong Markov process in the space $C(\RR_+,\BB_{2,2}^{-\frac{d}{2}-})$ with invariant measure given by the projection of $\mu^M$ onto the Fourier modes in $U^{N}$. 
        
        On the other hand, $v^{N}$ is spectrally supported on $V_k^N$ and can therefore be identified with a finite-dimensional SDE with smooth coefficients. Thus, we only have to deal with the non-linear part $B^N$ to rule out explosions.  It suffices to show weak coercivity (see~\cite[Chapter~3]{liu2015stochastic}), which follows from 
        \begin{equation}\label{equ.preserve_L2}
            \langle u,B^N(u)\rangle_{L^2(\MM)} = 0.
        \end{equation}
        This follows from the divergence-free property of $u$ since
        \begin{equation*}
            \begin{aligned}
                \langle u,B^N(u)\rangle &= \langle \rho^N*u,\operatorname{div}((\rho^N*u)\otimes(\rho^N*u))\rangle \\
                &= \sum_{i,j} \langle\rho^N*u_i,\partial_j(\rho^N*u_i\cdot\rho^N*u_j)\rangle\\
                &=-\sum_{i,j}\langle (\partial_j\rho^N*u_i)\cdot\rho^N*u_j,\rho^N*u_i\rangle\\
                &=-\sum_{i,j}\langle \partial_j(\rho^N*u_i\cdot\rho^N*u_j),\rho^N*u_i\rangle = 0.
            \end{aligned}
        \end{equation*}
        This proves the existence and pathwise uniqueness of strong solutions in $C(\RR_+,\BB_{2,2}^{-\frac{d}{2}-})$.
        
        To prove invariance of $\mu^M$, we can either directly use the divergence-free property of $B^N$ when expressed in Fourier modes, or proceed as in \cite{gubinelli2013regularization,cannizzaro2021stationary} to calculate the generator of equation $\eqref{equ.Stochastic_NS_torus_truncated}$ and use the general result in \cite{echeverria1982criterion}. We postpone the argument until we introduced the generator of the equation $\eqref{equ.Stochastic_NS_torus_truncated}$. 
    \end{proof}
    
    To proceed, we introduce the generator of equation \eqref{equ.Stochastic_NS_torus_truncated} acting on cylinder functions. A cylinder function is a function with representation $F(u) = f(u(\varphi_1),\dots,u(\varphi_n))$ for some $n\in\NN$, $\varphi_1,\dots,\varphi_n \in \fS_f$ and smooth $f$ with at most polynomial growth at infinity. By It\^o's formula, the generator $\fL^N$ of equation $\eqref{equ.Stochastic_NS_torus_truncated}$ is given by $\fL^N  = \fL_\theta + \fG^N$.

        \begin{equation}\label{equ.geneator_Ltheta_cylindrical}
            \fL_\theta F(u) = -\sum_{i}\partial_i f \cdot u(A^\theta\varphi_i) + \sum_{i,j}\partial_{ij}^2f \langle A^{\frac{\theta}{2}}\varphi_i,A^{\frac{\theta}{2}}\varphi_j\rangle,
        \end{equation}
        and 
        \begin{equation}\label{equ.geneator_G_cylindrical}
            \tilde{\fG}^N F(u) = -\sum_i \partial_i f\langle B^N(u),\varphi_i\rangle,\qquad \fG^N = \lambda_{N,\theta}\tilde{\fG}^N.
        \end{equation}
    Using the approach of~\cite{gubinelli2020infinitesimal}, we calculate the action of the generator on the space $L^2(\mu^M)$.
    \begin{theorem}\label{thm.Geneator_in_Fourier}
        On the space $L^2(\mu^M)$, the operator $\fL_\theta$ is symmetric and the operator $\tilde \fG^N$ is anti-symmetric. Furthermore, we have the decomposition $\tilde{\fG}^N = \tilde{\fG}^N_+ + \tilde{\fG}^N_-$ such that $\tilde{\fG}^N_+:\FK_n\rightarrow\FK_{n+1}$ and  $\tilde{\fG}^N_-:\FK_n\rightarrow\FK_{n-1}$. 
        
        For sufficiently regular $\varphi = (\varphi_n)_{n\in\NN} \in \FK$, the operators have the following expressions in  Fourier modes for $n \ge 0$:
        \begin{equation}\label{equ.geneator_Ltheta_Fourier}
            \fF(\fL_\theta\varphi)_n((l_i,k_i)_{1:n}) = -(2\pi)^{2\theta}\sum_{i=1}^n|k_i|^{2\theta}\hat{\varphi}_n((l_i,k_i)_{1:n}),
        \end{equation}
        and $\tilde{\fG}_+^N(\FK_0) = 0$, $\tilde{\fG}_-^N(\FK_0) = 0$, while for $n \geq 1$
        \begin{equation}
            \begin{aligned}
         \fF(\tilde{\fG}_+^{N,M}\varphi)_n(l_{1:n},k_{1:n}) & = 2\pi\iota(n-1)\fR_{k_{n-1},k_n}^N(k_n+k_{n-1})_{l_n}\\
         &\qquad \cdot \hat{\varphi}_{n-1}((l_{n-1},k_n+k_{n-1}),\dots),
            \end{aligned}
        \end{equation}
        and
        \begin{equation}
            \begin{aligned}
                \fF(\tilde{\fG}_-^{N,M}\varphi)_n(l_{1:n},k_{1:n}) & =\frac{2\pi\iota n(n+1)}{M^d}\sum_{i}\sum_{p+q=k_n}\fR_{p,q}^N\Big[p_{l_n}\hat{\varphi}_{n+1}((i,p),(i,q),\dots) \\
                &\hspace{120pt} -k_{n,i}\hat{\varphi}_{n+1}((l_n,p),(i,q),\dots) \Big],
            \end{aligned}
        \end{equation}
        where $\fR_{p,q}^N = \hat{\rho}^N(p)\hat{\rho}^N(q)\hat{\rho}^N(p+q)$. 
    \end{theorem}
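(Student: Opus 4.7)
The plan is to follow the Fock-space approach of \cite{gubinelli2020infinitesimal, cannizzaro2021stationary}: translate the cylinder-function generator formulas \eqref{equ.geneator_Ltheta_cylindrical}--\eqref{equ.geneator_G_cylindrical} into coordinates on Wiener chaos. It will suffice to verify all claims on test functions of the form $F = H_n(\mu^M(h)) = I_n(h^{\otimes n})$ for unit $h \in \HH_M$, since polarization together with the density of $\{h^{\otimes n} : h \in \HH_M,\ \|h\| = 1\}$ in $\FK_n$ extends the conclusions to the whole Fock space $\FK$.

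For the symmetric part I recognize $\fL_\theta$ as the $\HH_M$-valued Ornstein--Uhlenbeck generator with drift $-A^\theta$ and diffusion $\sqrt{2}\,A^{\theta/2}$. The fluctuation--dissipation identity makes $\mu^M$ reversible, so $\fL_\theta$ is self-adjoint on $L^2(\mu^M)$ and acts diagonally on chaos as the derivation $I_n(\varphi_n) \mapsto -I_n(\sum_{i=1}^n A^\theta_{(i)} \varphi_n)$, with $A^\theta_{(i)}$ acting on the $i$-th tensor slot. Since $A^\theta$ has Fourier multiplier $(2\pi|k|)^{2\theta}$, \eqref{equ.geneator_Ltheta_Fourier} is immediate.

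For $\tilde{\fG}^N$ I compute directly from the cylinder formula:
\[
    \tilde{\fG}^N F = -n\,H_{n-1}(\mu^M(h))\,\langle B^N(u), h\rangle_{\HH} = -n\,I_{n-1}(h^{\otimes(n-1)})\,I_2(K_h),
\]
where $K_h \in \HH^{\otimes 2}$ is the symmetric kernel of the second-chaos random variable $\langle B^N(u), h\rangle_\HH$. Its mean vanishes because $\EE B^N(u)$ is the divergence of a translation-invariant (constant) tensor, so there is no $\FK_0$ contribution. The product formula from Section~2.2 then produces chaos components of orders $n+1$, $n-1$ and $n-3$. The key structural step is that the $\FK_{n-3}$ coefficient is proportional to $\langle h^{\otimes 2}, K_h\rangle_{\HH^{\otimes 2}} = \langle h, B^N(h,h)\rangle_\HH$, and polarization of the cubic identity $\langle u, B^N(u)\rangle \equiv 0$ from \eqref{equ.preserve_L2} forces this trilinear expression to vanish identically. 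Hence only orders $n \pm 1$ survive, and this yields the decomposition $\tilde{\fG}^N = \tilde{\fG}^N_+ + \tilde{\fG}^N_-$ with the stated chaos-shifting properties. The antisymmetry $(\tilde{\fG}^N_+)^* = -\tilde{\fG}^N_-$ can then be read off directly from the Fourier formulas below, or derived abstractly from the observation that $\tilde{\fG}^N$ is the Koopman generator of the deterministic flow $\dot u = -B^N(u)$, which preserves $\mu^M$ thanks to \eqref{equ.preserve_L2} together with the vanishing Malliavin divergence of $B^N$ (a consequence of the trace-freeness of $\widehat{\Lp}$ combined with the divergence structure in $B^N$).

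The explicit Fourier expressions follow by inserting
\[
    \widehat{B^N(u)}_l(k) = \frac{2\pi\iota}{M^d}\,\hat{\rho}^N(k)\,\widehat{\Lp}_{lj}(k)\,k_m \sum_{p+q=k} \hat{\rho}^N(p)\hat{\rho}^N(q)\,\hat{u}_j(p)\hat{u}_m(q)
\]
into the chaos contractions above and reading off the resulting kernels: the triple $\fR^N_{p,q} = \hat{\rho}^N(p)\hat{\rho}^N(q)\hat{\rho}^N(p+q)$ assembles from the three mollifiers, the prefactors $(n-1)$ and $n(n+1)/M^d$ arise from the Hermite derivative combined with the product-formula coefficients at $r = 0$ and $r = 1$ respectively, and the two-term structure of $\tilde{\fG}^N_-$ comes from expanding $\widehat{\Lp}_{lj}(k)k_m = k_m\delta_{lj} - k_l k_j k_m/|k|^2$ and exploiting that the input $\varphi_{n+1}$ is divergence-free in each Fourier slot to simplify the quadratic contraction. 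I expect the main obstacle to be purely combinatorial bookkeeping---matching the exact prefactors, the symmetrizations over chaos indices, and the reorganisation of the Leray-projector contractions to the form displayed in the statement---while the conceptually nontrivial point, the vanishing of the $\FK_{n-3}$ component, is a clean consequence of polarizing the energy-preservation identity \eqref{equ.preserve_L2}.
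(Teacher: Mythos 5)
Your proposal is correct and follows essentially the same route as the paper: reduce to $F=H_n(\mu^M(h))$, use the diagonal action of the Ornstein--Uhlenbeck generator for \eqref{equ.geneator_Ltheta_Fourier}, and apply the Wiener chaos product formula to $I_{n-1}(h^{\otimes(n-1)})I_2(K_h)$, with the would-be $\FK_{n-3}$ component killed by the energy identity \eqref{equ.preserve_L2} (the paper re-derives this vanishing by the same integration by parts that proves \eqref{equ.preserve_L2}, so your identification of the $r=2$ contraction with $\langle h,B^N(h)\rangle$ is exactly the point). The only place you diverge is the anti-symmetry: the paper verifies $\langle\tilde{\fG}^N_+(h^{\otimes n}),g^{\otimes(n+1)}\rangle=-\langle h^{\otimes n},\tilde{\fG}^N_-(g^{\otimes(n+1)})\rangle$ by a direct integration by parts, whereas your Koopman/Liouville argument (invariance of $\mu^M$ under the truncated Euler flow) is a valid alternative that the paper itself alludes to when proving invariance of $\mu^M$; just note that ``reading it off the Fourier formulas'' amounts to the same computation the paper performs, so it cannot be skipped entirely.
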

    
    \begin{proof}
        We divide the proof into three steps.
        The first two steps are used for the calculation of $\fL_\theta,\tilde{\fG}_+^N$ and $\tilde{\fG}_-^N$. The final step is to prove the anti-symmetry of $\tilde \fG^N$. Due to linearity, we only need to consider $F(u) = H_n(u(h)) = I_n(h^{\otimes h})$ with $h \in \fS_f(\MM)$ such that $\|h\|_{\HH_\MM}=1$.
        
        \noindent \textbf{Step 1.} We have for $\fL_\theta$:
        \begin{equation}
            \begin{aligned}
                \fL_\theta H_n(\mu^M(h)) &= -nH_{n-1}(\mu^M(h))\mu^M(A^\theta h) + n(n-1)H_{n-2}(\mu^M(h))\| A^{\frac{\theta}{2}}h\|^2_{\HH}\\
                &=-nI_n(h^{\otimes(n-1)}\otimes A^{\theta}h),
            \end{aligned}
        \end{equation}
        and thus for any $\varphi \in \FK$
        \[
            (\fL_\theta \varphi)_n = -\sum_{i=1}^n A_{x_i}^{\theta}\varphi_n,
        \]
        which gives the desired result in Fourier modes.
        
        \noindent \textbf{Step 2.} Defining $\rho_{i,y}^N(x)$ as a vector with value $\rho^N(x-y)$ at the $i-$th position and $0$ at other positions, we have 
        \begin{equation}
            \begin{aligned}
                    \tilde{\fG}^{N,M}F
                    &= -nH_{n-1}(u(h))\langle h,\rho^N*(\operatorname{div}(\rho^N*\mu^{M})\otimes(\rho^N*\mu^M))\rangle\\
                    &= n\sum_{i,j}\int \partial_jh_i(x)\rho^N(x-y)I_1(\rho_{i,y}^N)I_1(\rho_{j,y}^N)I_{n-1}(h^{\otimes(n-1)})dxdy \\
                    &= n\sum_{i,j}\int\partial_j h_i(x)\rho^N(x-y)I_2(\rho^N_{i,y}\otimes \rho_{j,y}^N) I_{n-1}(h^{\otimes(n-1)})dxdy\\
                    &= \tilde{\fG}_+^{N,M} + \tilde{\fG}_-^{N,M}.
            \end{aligned}
        \end{equation}
        The first equality holds because $h$ is divergence-free and the Leray projection is a Fourier multiplier, hence it commutes with the convolution. The third equality is because $I_1(\rho_{i,y}^N)I_1(\rho_{j,y}^N)=I_2(\rho^N_{i,y}\otimes \rho_{j,y}^N) + \langle\rho_{i,y}^N,\rho_{j,y}^N\rangle$ and
        \begin{equation*}
            \begin{aligned}
                &\sum_{i,j}\int \partial_jh_i(x)\rho^N(x-y)\langle\rho_{i,y}^N,\rho_{j,y}^N\rangle dxdy =\sum_i\int\partial_ih_i(x)\rho^N(x-y)\|\rho_y^N\|_{L^2}^2dxdy = 0.
            \end{aligned}
        \end{equation*}
        Using the contraction rule for Wiener-Itô integrals, see Proposition~1.1.3 of~\cite{Nualart2006}, $\tilde{\fG}_+^{N,M}$ and $\tilde{\fG}_-^{N,M}$ are given by (we drop the operator $I$ here) 
        \begin{equation*}
            \begin{aligned}
                \tilde{\fG}_+^{N,M}F(l_{1:n+1},x_{1:n+1}) & = \frac{n}{2}\int (\partial_{l_{n+1}}h_{l_n}(x) + \partial_{l_n}h_{l_{n+1}}(x))h^{\otimes(n-1)}(l_{1:n-1},x_{1:n-1})\\
            & \hspace{40pt} \cdot \rho^N_y(x)\rho^N_{z_n}(y)\rho^N_{z_{n+1}}(y)dxdy,
            \end{aligned}
        \end{equation*} 
        and 
        \begin{equation*}
            \begin{aligned}
                \tilde{\fG}_-^{N,M}F(l_{1:n-1},x_{1:n-1}) &= n(n-1)\int\sum_i(\partial_ih_{l_{n-1}}(x) +\partial_{l_{n-1}}h_i(x))h^{\otimes(n-2)}(l_{1:n-2},x_{1:n-2})\\
                &\hspace{90pt}\cdot h_i(z)\rho^N_y(x)\rho^N_z(y)\rho_{z_{n-1}}^N(y)dxdydz\\
                & = n(n-1)\int\sum_i(\partial_{l_{n-1}}h_i(x)\rho^{N}_{z_{n-1}}(y) - h_{l_{n-1}}(x)\partial_i\rho^N_{z_{n-1}}(y))\\
                &\hspace{70pt} \cdot h^{\otimes(n-2)}(l_{1:n-2},x_{1:n-2})h_i(z)\rho^N_y(x)\rho^N_z(y)dxdydz.
            \end{aligned}
        \end{equation*}
        The second equality holds because $h$ is divergence-free and it is useful for the estimations below. There is also a vanishing term:
        \begin{equation*}
            \begin{aligned}
                \tilde{\fG}_{-3}^{N,M}F(l_{1:n-3},x_{1:n-3}) &= \sum_{i,j}\int\partial_{j}h_{i}(x)\rho^N_y(x) h^{\otimes(n-3)}(l_{1:n-3},x_{1:n-3}) \langle h_{i},\rho^N_y\rangle\langle h_{j},\rho^N_y\rangle dxdy \\
                & = - \sum_{i,j}\int h_{i}(x)\rho^N_y(x)h^{\otimes(n-3)}(l_{1:n-3},x_{1:n-3})  \\
                &\hspace{50pt} \cdot \partial_{y_j} (\langle h_{i},\rho^N_y\rangle\langle h_{j},\rho^N_y\rangle) dxdy \\
                & = -\tilde{\fG}_{-3}^{N,M}F(l_{1:n-3},x_{1:n-3}),
            \end{aligned}
        \end{equation*}
        using that $h$ is divergence-free in the last step.


        A direct calculation shows that  $\tilde{\fG}_-^{N,M} = 0$ for $n=0,1$, and $\tilde{\fG}_+^{N,M} = 0$ for $n=0$. Hence, we have for all $\varphi \in \FK$
        \begin{equation}
            \begin{aligned}
                \tilde{\fG}_+^{N,M}\varphi_n & = \frac{n}{2}\int [\partial_{l_{n+1}}\varphi_n((l_n,x),\dots) + \partial_{l_n}\varphi_n((l_{n+1},x),\dots)]\\
            &\hspace{50pt}\cdot \rho^N_y(x)\rho^N_{z_n}(y)\rho^N_{z_{n+1}}(y)dxdy,
            \end{aligned}
        \end{equation}
        and
        \begin{equation}
            \begin{aligned}
                \tilde{\fG}_-^{N,M}\varphi_n =&n(n-1)\int \sum_i[\partial_{x_{l_{n-1}}}\varphi_n
                ((i,x)(i,z),\dots)\rho^{N}_{z_{n-1}}(y)\\
        &\hspace{40pt} - \varphi_n((l_{n-1},x),(i,z),\dots)\partial_i\rho^N_{z_{n-1}}(y)]\rho^N_y(x)\rho^N_z(y)dxdydz.
            \end{aligned}
        \end{equation}
        This gives the desired result on the Fourier modes of $\tilde{\fG}^{N,M}$.
        
    \noindent \textbf{Step 3.} Finally, we prove the anti-symmetry of $\tilde{\fG}^{N,M}$: We have for $h,g \in \fS_f$
    \begin{equation}
    \begin{aligned}
        &\frac{1}{(n+1)!}\langle\tilde{\fG}_+^{N,M}(h^{\otimes n}),g^{\otimes(n+1)}\rangle \\
        &\hspace{10pt} = n \langle g,h\rangle^{n-1} \int\sum_{l_{n:n+1}} g_{l_n}(z_n) g_{l_{n+1}}(z_{n+1})\rho^N_y(x)\rho^N_{z_n}(y)\rho^N_{z_{n+1}}(y) \\ 
        & \hspace{80pt}\cdot \partial_{l_{n+1}}h_{l_{n}}(x)dxdydz_{n:n+1} \\
        &\hspace{10pt} = -n \langle g,h\rangle^{n-1} \int\sum_{l_{n:n+1}}g_{l_n}(z_n) g_{l_{n+1}}(z_{n+1})\rho^N_y(x)\partial_{y_{l_{n+1}}}(\rho^N_{z_n}(y)\rho^N_{z_{n+1}}(y)) \\
        &\hspace{90pt} \cdot h_{l_{n}}(x)dxdydz_{n:n+1}\\
        &\hspace{10pt} =-n \langle g,h\rangle^{n-1} \int\sum_{l_{n},i} \partial_{i}g_{l_n}(x)g_{i}(z) \rho^N_y(x)\rho^N_{z_n}(y)\rho^N_{z}(y) h_{l_{n}}(x) dxdydzdz_{n}+0,
    \end{aligned}
    \end{equation}
    where we applied integration by parts in the second and third equalities and the second term in the third equality vanishes because $g$ is divergence-free. For the remaining part in the last line, we write $i = l_{n+1}, z = z_{n+1}$ and exchange $z_n$ and $x$. 
    For the operator $\tilde{\fG}_-^{N,M}$, we have 
    \begin{equation}
        \begin{aligned}
            \frac{1}{(n+1)!}\langle\tilde{\fG}_-^{N,M}(g^{\otimes (n+1)}),h^{\otimes n}\rangle 
            & =n\langle g, h\rangle^{n-1}\int\sum_{l_{n},i}h_{l_n}(z_n)g_i(z)\rho^N_y(x)\rho^N_z(y)\rho^N_{z_n}(y)\\
            &\hspace{70pt}\cdot (\partial_{l_n}g_i +\partial_i g_{l_n})(x)dxdydzdz_{n}\\
            & = 0 + \left(-\frac{1}{(n+1)!}\langle\tilde{\fG}_+^{N,M}(h^{\otimes n}),g^{\otimes(n+1)}\rangle\right).
        \end{aligned}
    \end{equation}
    %
    The first term after the second equality vanishes because
    \begin{equation*}
        \begin{aligned}
            &\int\sum_{l_n,i}h_{l_n}(z_n)g_i(z)\rho^N_y(x)\rho^N_z(y)\rho^N_{z_n}(y)\partial_{l_n}g_i(x)dxdydzdz_n\\
            &\hspace{30pt} = \int \sum_{l_n,i}(h_{l_n}*\rho^N)(y)\partial_{l_n}(g_i*\rho^N)^2(y)dydz_n = 0,
        \end{aligned}
    \end{equation*}
    because $h$ is divergence-free. Thus the operator $\fG^{N,M}$ is anti-symmetric.
    \end{proof}
    
    Now we can complete the proof of the Theorem \ref{thm.solution_truncated}, namely show that $\mu^M$ is an invariant measure for $u^{N,M}$.

    \begin{proof}[Proof of the invariance of $\mu^M$ in Theorem \ref{thm.solution_truncated}]
        Let $\Pi_K$ be the projection on the space generated by $\{\tilde e_k^i\}_{0\neq k \in \ZZ^d_M: \frac{|k|}{N}\le K}$. Then $\Pi_K$ commutes with $\fL^N$ if $K$ is sufficiently large, and the invariance of $\mu\circ \Pi_K^{-1}$ for $\Pi_K u^{N}$ follows from Echeverria's criterion~\cite{echeverria1982criterion} because
        \[
            \EE_{\mu \circ \Pi_K^{-1}}[\fL^N \Pi_K \varphi] = \EE_{\mu}[\fL^N \Pi_K \varphi] = 0,
        \]
        for any cylinder function $F$, which follows from Theorem~\ref{thm.Geneator_in_Fourier}.
    \end{proof}

    Next, we derive estimates for $\tilde{\fG}^{N}$, along the lines of~\cite{gubinelli2020infinitesimal, gubinelli2020hyperviscous}. We consider a function $\omega: \NN\rightarrow(0,\infty)$, which we interpret as a weight. 
    \begin{theorem}\label{thm.geneator_estimation}
        The operators $\tilde{\fG}_+^{N}$ and $\tilde{\fG}_-^{N}$ satisfy for $\zeta \in [0,1]$ and $\lambda_\theta^\zeta = \frac{d+2}{4\theta} + \zeta\frac{1\vee \theta-1}{2\theta}$
        \begin{equation}\label{eq:neg}
             \|\omega(\fN)(\lambda - \fL_\theta)^{\beta - \lambda^\zeta_\theta}\tilde{\fG}_-^{N}\varphi\| \lesssim\|\fN^{1 + \frac{1-\zeta}{2}(1-\frac{1}{\theta\vee 1})}\omega(\fN-1)(\lambda-\fL_\theta)^\beta\varphi\|,
        \end{equation}
        if $\beta > \frac{d}{4\theta}$, and 
        \begin{equation}\label{eq:pos}
             \|\omega(\fN)(\lambda - \fL_\theta)^{\beta - \lambda^\zeta_\theta}\tilde{\fG}_+^{N}\varphi\| \lesssim\|\fN^{1 + \frac{1-\zeta}{2}(1-\frac{1}{\theta\vee 1})}\omega(\fN+1)(\lambda-\fL_\theta)^\beta\varphi\|,
        \end{equation}
        if $\beta < \lambda_\theta^\zeta -\frac{d}{4\theta}$; the implicit constants in \eqref{eq:neg} and~\eqref{eq:pos} depend on $M$ if $\theta>1$, but they are independent of $M$ for $\theta \le 1$.
        Next, we also have for $0\leq \beta < \frac{d}{2\theta}$, 
        \begin{equation}
            \|(\lambda - \fL_\theta)^{-\frac{\beta}{2}}\fG_+^{N}\varphi\|^2
            \lesssim \lambda_N^2N^{d-2\beta\theta}\|\fN^{\frac{3}{2}-\frac{1}{2(\theta\vee 1)}}(-\fL_\theta)^{\frac{1}{2\theta}}\varphi\|^2,
        \end{equation}
        and for $\fG^N_-$, we have the estimate
        \begin{equation}
            \|(\lambda - \fL_\theta)^{-\frac{1}{2\theta}}\fG^{N}_-\varphi\|^2 \lesssim \|\fN(-\fL_\theta)^{1-\frac{1}{2\theta}}\varphi\|^2.
        \end{equation}
        Finally, we have a non-uniform estimate of the operator $\tilde{\fG}_{\pm}^{N}$. 
        \begin{equation}\label{equ.non_uniform_estimation}
            \|\tilde{\fG}^N\varphi\|\lesssim_N \|\fN^{\frac{3}{2}}\varphi\|.
        \end{equation}
    \end{theorem}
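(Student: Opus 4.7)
My plan is to work entirely at the level of Fourier coefficients, using the explicit expressions for $\tilde{\fG}_\pm^N$ and $\fL_\theta$ from Theorem~\ref{thm.Geneator_in_Fourier}. Since $(-\fL_\theta)$ acts on $\FK_n$ as the scalar Fourier multiplier $(2\pi)^{2\theta}\sum_{i=1}^n |k_i|^{2\theta}$, the operator $(\lambda-\fL_\theta)^\alpha$ is diagonal in Fourier and every norm appearing on the left-hand side reduces to a weighted $\ell^2$-sum over $\ZZ_M^{dn}$. Modulo this reduction, all the claimed inequalities come from a single mechanism: a Cauchy--Schwarz split of the convolution sum defining $\tilde{\fG}_-^N$, combined with a Young-type kernel bound on $\ZZ_M^d$.

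Concretely, I would treat $\tilde{\fG}_-^N$ first. From the Fourier formula, $(\tilde{\fG}_-^N\varphi)_n(\dots,k_n)$ is a discrete convolution obtained by integrating $\hat{\varphi}_{n+1}(\dots,p,q)$ against $\fR^N_{p,q}\,p$ over $p+q=k_n$. With the weight
\[
K(p,q) = (1+|p|^{2\theta})^{-a}(1+|q|^{2\theta})^{-b},
\]
Cauchy--Schwarz yields
\[
\bigg|\sum_{p+q=k_n} \fR^N_{p,q}\,p\,\hat{\varphi}_{n+1}(\dots,p,q)\bigg|^2 \le \bigg(\sum_{p+q=k_n} |p|^2 K(p,q)\bigg)\bigg(\sum_{p+q=k_n} \frac{|\hat{\varphi}_{n+1}|^2}{K(p,q)}\bigg).
\]
The first factor is the classical convolution kernel sum, uniformly bounded in $k_n$ precisely when the exponents exceed the threshold $d/(2\theta)$; this is where the condition $\beta>d/(4\theta)$ enters. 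The second factor, after summing over $k_n$ and applying Plancherel, is controlled by $\|(\lambda-\fL_\theta)^\beta\varphi_{n+1}\|^2$ once $a,b$ are tuned to match the $(\lambda-\fL_\theta)^{\beta-\lambda_\theta^\zeta}$ weight on the left. Summing over $n$ with the weight $\omega(\fN)$ then produces the first inequality.

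The bound on $\tilde{\fG}_+^N$ follows from the bound on $\tilde{\fG}_-^N$ by duality, using the anti-symmetry identity $(\tilde{\fG}_+^N)^* = -\tilde{\fG}_-^N$ established in Theorem~\ref{thm.Geneator_in_Fourier}; this exchanges $\beta$ with $\lambda_\theta^\zeta-\beta$ and produces the complementary range $\beta<\lambda_\theta^\zeta-d/(4\theta)$. The two separate estimates with prefactor $\lambda_{N,\theta}^2 N^{d-2\beta\theta}$ and with $(\lambda-\fL_\theta)^{-1/(2\theta)}$ on the left come from retuning $a,b$ in the same Cauchy--Schwarz split: the factor $N^{d-2\beta\theta}$ appears because $\fR^N_{p,q}$ is supported in $|p|,|q|\lesssim N$, so the first kernel sum picks up a volume factor $N^d$ when $\beta<d/(2\theta)$. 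The non-uniform bound~\eqref{equ.non_uniform_estimation} is the crudest and follows by absorbing every power of $|k|$ into an $N$-dependent constant using the compact support of $\fR^N_{p,q}$.

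The hard part will be the fine bookkeeping of exponents. Matching $\lambda_\theta^\zeta=\frac{d+2}{4\theta}+\zeta\frac{(1\vee\theta)-1}{2\theta}$ requires carefully tracking how the factor $|p|^2$ arising from the derivative in $\tilde{\fG}_-^N$ is split between $|p|^{2\theta}$ and $|p+q|^{2\theta}=|k_n|^{2\theta}$; this split is asymmetric and behaves differently for $\theta\le 1$ versus $\theta>1$, which accounts both for the correction $(1\vee\theta-1)/(2\theta)$ and for the $M$-dependence of the constant. For $\theta>1$ the kernel sum has a non-integrable singularity near $p=0$ on the torus, so the bound picks up a constant depending on the smallest nonzero Fourier mode $\sim M^{-1}$, while for $\theta\le 1$ the singularity is integrable on $\ZZ_M^d$ uniformly in $M$ and the estimate becomes $M$-independent.
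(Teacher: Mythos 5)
Your proposal follows essentially the same route as the paper: the Fourier-side Cauchy--Schwarz split of the convolution sum against a power-weight kernel whose summability threshold is $d/(2\theta)$, duality via anti-symmetry for $\tilde{\fG}_+^N$, the volume factor $N^{d-2\beta\theta}$ coming from the support of $\fR^N_{p,q}$, and a crude support bound for the non-uniform estimate. One small correction: the $M$-dependence for $\theta>1$ is not caused by a non-integrable kernel singularity at $p=0$ (there is none, since $\lambda>0$); in the paper it arises from converting the derivative factor $|k_n|^2$ into powers of $\sum_i|k_i|^{2\theta}$ via $|k_n|^{2-2\theta}\le M^{2\theta-2}$ for $|k_n|\ge 1/M$ --- though you do correctly trace the constant to the smallest nonzero Fourier mode.
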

    
    \begin{proof}
        For $\tilde{\fG}_-^{N}$, we have, by the symmetry of $\hat{\varphi}$,
        \begin{equation*}
            \begin{aligned}
                &\|\omega(\fN)(\lambda - \fL_\theta)^{\beta - \lambda_\theta^\zeta}\tilde{\fG}_-^{N}\varphi\|^2 \\
                &\hspace{10pt}\lesssim \sum_{n\geq 0}n!\omega(n)^2\sum_{l_{1:n},k_{1:n}}(1 + (2\pi)^{2\theta}(|k_1|^{2\theta} +\dots+|k_n|^{2\theta}))^{2\beta-2\lambda_\theta^\zeta}\frac{1}{M^{d(n+2)}} \\
                &\hspace{10pt}\qquad \cdot n^2(n+1)^2\left|\sum_i\sum_{p+q = k_n}p_{l_n}\hat{\varphi}_{n+1}((i,p),(i,q), \dots) - k_{n,i}\hat{\varphi}_{n+1}((l_n,p),(i,q), \dots)\right|^2\\
                & \hspace{10pt} \lesssim \sum_{n\geq 0} \frac{n!n^2(n+1)^2\omega(n)^2}{M^{d(n+2)}}\sum_{l_{1:n},k_{1:n},i}(1 + |k_1|^{2\theta} +\dots+|k_n|^{2\theta})^{2\beta-2\lambda_\theta^\zeta}\\
                &\hspace{10pt}\qquad \cdot|k_n|^2\left(\left|\sum_{p+q=k_n}\hat{\varphi}_{n+1}((l_n,p),(i,q),\dots)\right|^2 + \left|\sum_{p+q=k_n}\hat{\varphi}_{n+1}((i,p),(i,q),\dots)\right|^2\right).
            \end{aligned}
        \end{equation*}
        By multiplying the RHS by the dimension $d$, we notice that 
        \begin{equation*}
        \sum_{l_{1:n},i}\left|\sum_{p+q=k_n}\hat{\varphi}_{n+1}((i,p),(i,q),\dots)\right|^2 \lesssim  d \cdot\sum_{l_{1:n},i}\left|\sum_{p+q=k_n}\hat{\varphi}_{n+1}((l_n,p),(i,q),\dots)\right|^2.
        \end{equation*}
        We have for any $\alpha > \frac{d}{2\theta}$
        \begin{equation}
            \frac{1}{M^d} \sum_{p+q = k_n}(C + |p|^{2\theta} + |q|^{2\theta})^{-\alpha} \approx \int (C+|k_n|^{2\theta} + |p|^{2\theta})^{-\alpha}dp \lesssim (C+|k_n|^{2\theta})^{\frac{d}{2\theta}-\alpha},
        \end{equation}
        and therefore
        \begin{equation*}
            \begin{aligned}     
            &\frac{1}{M^d}\left|\sum_{p+q=k_n}\hat{\varphi}_{n+1}((l_n,p),(i,q), \dots)\right|^2 \leq (\lambda + |k_1|^{2\theta} + \dots + |k_n|^{2\theta})^{\frac{d}{2\theta}-\alpha}\\
            &\hspace{110pt} \cdot \sum_{p+q = k_n}(\lambda + |k_1|^{2\theta} + \dots + |k_{n-1}|^{2\theta} + |p|^{2\theta} + |q|^{2\theta})^\alpha|\hat{\varphi}_{n+1}|^2.
            \end{aligned}
        \end{equation*}
        We then obtain, choosing $\alpha = 2\beta > \frac{d}{2\theta}$, 
        \begin{equation*}
            \begin{aligned}
                &\|\omega(\fN)(\lambda - \fL_\theta)^{\beta - \lambda_\theta^\zeta}\tilde{\fG}_-^{N}\varphi\|^2 \\
                & \hspace{20pt}\lesssim \sum_{n\geq 0} \frac{n!n^2(n+1)^2\omega(n)^2}{M^{d(n+1)}}\sum_{l_{1:n+1},k_{1:n}}(\lambda + |k_1|^{2\theta} +\dots+|k_n|^{2\theta})^{-2\lambda_\theta^\zeta +\frac{d}{2\theta}}|k_n|^2\\
                &\hspace{20pt}\qquad \cdot \sum_{p+q = k_n}(\lambda + |k_1|^{2\theta} + \dots + |k_{n-1}|^{2\theta} + |p|^{2\theta} + |q|^{2\theta})^{2\beta}|\hat{\varphi}_{n+1}((l_n,p),(l_{n+1},q),\dots)|^2 \\
                & \hspace{20pt} \lesssim \sum_{n\geq 0} \frac{1}{M^{dn}}n!n^{3 - \zeta -\frac{1-\zeta}{\theta\vee 1}}\omega(n-1)^2\sum_{(l,k)_{1:n}}(\lambda + |k_1|^{2\theta} + \dots + |k_{n}|^{2\theta})^{2\beta}|\hat{\varphi}_{n}|^2 \\
                &\hspace{20pt} = \|\fN^{1 + \frac{1-\zeta}{2}(1-\frac{1}{\theta\vee 1})}\omega(\fN - 1)(\lambda - \fL_\theta)^\beta\varphi\|^2.
            \end{aligned}
        \end{equation*}
        In the second inequality, we use symmetry of $\hat \varphi_n$ and $\sum_{i=1}^n |k_i|^2 \leq \left(\sum_{i=1}^n |k_i|^{2\theta}\right)^{\frac{1}{\theta}}$ for $\theta \leq 1$ while for $\theta > 1$, we use $|k_n|\ge \frac1M$ and therefore $|k_n|^2 \le M^{2\theta-2}$ to obtain 
        $\sum_{i=1}^n |k_i|^2 \lesssim_M \left(\sum_{i=1}^n |k_i|^{2\theta}\right)^{\zeta}\left(\sum_{i=1}^n |k_i|^{2\theta}\right)^{\frac{1-\zeta}{\theta}}n^{(1-\zeta)(1-\frac{1}{\theta})}$.
        
        The estimate~\eqref{eq:pos} for $\tilde{\fG}_+^{N}$ now follows by a duality argument from~\eqref{eq:neg} because $\langle \mathcal G^N_+ \varphi, \psi\rangle= -\langle \mathcal  \varphi, G^N_- \psi\rangle$ for all cylinder functions $\varphi$, $\psi$. 
        
        For $\fG_+^N$ (i.e. including $\lambda_N$) and $\beta < \frac{d}{2\theta}$ we have
        \begin{equation*}
        \begin{aligned}
            \|(\lambda - \fL_\theta)^{-\frac{\beta}{2}}\fG_+^{N,M}\varphi\|^2 & \lesssim \sum_{n\geq 0}\sum_{(l,k)_{1:n}}|\fR_{k_n,k_{n-1}}^N|^2\frac{n!\lambda_N^2}{M^{nd}}(\lambda + |k_1|^{2\theta}+\dots+|k_n|^{2\theta})^{-\beta}\\
                &\hspace{30pt}\cdot(n-1)^2|k_n+k_{n-1}|^2|\hat{\varphi}_{n-1}((l_{n-1},k_n+k_{n-1}),\dots)|^2 \\
            & \lesssim \lambda_N^2N^{d-2\beta\theta}\|\fN^{\frac{3}{2}-\frac{1}{2(\theta\vee 1)}}(-\fL_\theta)^{\frac{1}{2\theta}}\varphi\|^2,
        \end{aligned}
    \end{equation*}
    where the estimation in the second inequality follows from 
    \begin{equation}
            \frac{1}{M^d} \sum_{p+q = k} |\fR_{p,q}^N|^2(C+|p|^{2\theta} + |q|^{2\theta})^{-\beta} \lesssim \int_{|p|\leq N} (C+|p|^{2\theta})^{-\beta}dp\lesssim N^{d-2\beta\theta}.
    \end{equation}
    Finally, for $\fG_-^N$, we have
    \begin{equation}
    \begin{aligned}
        &\|(\lambda - \fL_\theta)^{-\frac{1}{2\theta}}\fG^{N,M}_-\varphi\|^2 \\
        &\hspace{20pt} \lesssim  \sum_{n\geq 0} \frac{n!n^2(n+1)^2}{M^{(n+2)d}}\sum_{(l,k)_{1:n}}\frac{\lambda_N^2|\hat{\rho}^N(k_n)|^2}{(\lambda + |k_1|^{2\theta} + \dots + |k_n|^{2\theta})^{\frac{1}{\theta}}}\\
        &\hspace{25pt}\cdot\left|\sum_i\sum_{\substack{p+q=k_n\\|p|,| q| \leq N}}p_{l_n}\hat{\varphi}_{n+1}((i,p),(i,q),\dots) - k_{n,i}\hat{\varphi}_{n+1}((i_{n-1},p),(i,q),\dots)\right|^2 \\
        &\hspace{20pt}\lesssim\sum_{n\geq 0} \frac{n!n^2(n+1)^2}{M^{(n+2)d}}\sum_{(l,k)_{1:n},i}\frac{\lambda_N^2|\hat{\rho}^N(k_n)|^2|k_n|^2\sum_{\substack{p+q=k_n\\|p|,| q| \leq N}}|\hat{\varphi}_{n+1}(p,q)|^2}{(\lambda + |k_1|^{2\theta} + \dots + |k_n|^{2\theta})^{\frac{1}{\theta}}}\\
        &\hspace{20pt} \lesssim \sum_{n\geq 0}\frac{(n+1)!n^3}{M^{(n+1)d}}\sum_{(l,k)_{1:n}}\lambda_{N}^2|\hat{\rho}^N(k_n)|^2\frac{1}{M^d}\sum_{\substack{p+q=k_n\\|p|,| q| \leq N}}|\hat{\varphi}_{n+1}(p,q,\dots)|^2.
    \end{aligned}
    \end{equation}
    We note furthermore that
    \begin{equation*}
        \begin{aligned}
            &\frac{1}{M^d}\sum_{\substack{p+q=k_n\\|p|,| q| \leq N}}|\hat{\varphi}_{n+1}(p,q,\dots)|^2 \\
            &\lesssim \frac{1}{M^d}\sum_{\substack{p+q=k_n\\|p|,| q| \leq N}}(|p|^{2\theta}+|q|^{2\theta})^{\frac{1}{\theta}-2}\sum_{p+q = k_n}(|p|^{2\theta}+|q|^{2\theta})^{2-\frac{1}{\theta}}|\hat{\varphi}_{n+1}(p,q,\dots)|^2\\
            &\lesssim  N^{d+2-4\theta}\sum_{p+q = k_n}(|p|^{2\theta}+|q|^{2\theta})^{2-\frac{1}{\theta}}|\hat{\varphi}_{n+1}(p,q,\dots)|^2,
        \end{aligned}
    \end{equation*}
    which gives the result since since $\lambda_{N}^2 = N^{4\theta - d-2}$. For the non-uniform estimation, we calculate as follows 
    \begin{equation}
        \begin{aligned}
            &\|\tilde{\fG}_+^N\varphi\|^2 \\
            &\lesssim \sum_{n\geq 0}\sum_{(l,k)_{1:n}}|\fR^N_{k_n,k_{n-1}}|^2\frac{n!(n-1)^2}{M^{nd}}|k_n+k_{n-1}|^2|\hat{\varphi}_{n-1}((l_{n-1},k_n+k_{n-1}),\dots)|^2\\
            &\lesssim \sum_{n\geq 0}\sum_{(l,k)_{1:n-1}}\frac{n!(n-1)^2}{M^{(n-1)d}}N^{d+2}|\hat{\varphi}_{n-1}((l_{n-1},k_{n-1}),\dots)|^2 \\
            &\lesssim_N \|\fN^{\frac{3}{2}}\varphi\|^2.
        \end{aligned}
    \end{equation}
    And for the operator $\tilde{\fG}_-$, we have 
    \begin{equation}
        \begin{aligned}
            &\|\tilde{\fG}_-^N\varphi\|^2 \\
            &\lesssim \sum_{n\geq 0} \frac{n!n^2(n+1)^2}{M^{(n+2)d}}\sum_{(l,k)_{1:n}}|\hat{\rho}^N(k_n)|^2\\
            &\hspace{20pt}\cdot\left|\sum_i\sum_{\substack{p+q=k_n\\|p|,| q| \leq N}}p_{l_n}\hat{\varphi}_{n+1}((i,p),(i,q),\dots) - k_{n,i}\hat{\varphi}_{n+1}((i_{n-1},p),(i,q),\dots)\right|^2\\
            &\lesssim_N \sum_{n\geq0}\frac{n!n^2(n+1)^2}{M^{(n+1)d}}\sum_{(l,k)_{1:n+1}}|\hat{\varphi}_{n+1}|^2 \lesssim \|\fN^{\frac{3}{2}}\varphi\|^2.
        \end{aligned}
    \end{equation}
    \end{proof}

    \begin{remark}
        The dependence of the implicit constant in~\eqref{eq:neg} and~\eqref{eq:pos} for $\theta>1$ should also appear in equation~(11) of~\cite{gubinelli2020hyperviscous}. Replacing the estimate $\sum_{i=1} |k_i|^2 \lesssim_M \sum_{i=1}^n |k_i|^{2\theta}$ by Hölder's inequality, we can get a uniform-in-$M$ estimate with $\fN^{\frac32-\frac{1}{2\theta}}$ on the right hand side, which therefore extends to $M=\infty$. But to prove well-posedness we need $\mathcal N^\gamma$ with some $\gamma \le 1$, which requires $\theta \le 1$. Therefore, the proof of well-posedness of energy solutions to the 2d hyperviscous Navier-Stokes equation in the plane in~\cite{gubinelli2020hyperviscous} seems incorrect. The more precise estimations in~\cite[Lemma 2.4]{cannizzaro2023gaussian} would give a bound with $\fN^{\frac32-\frac{1}{\theta}}$, thus $\theta \le 2$ would work. For $\theta > 2$ we are currently lacking an argument to prove well-posedness in $\RR^d$.
    \end{remark}

    \section{Existence and uniqueness of energy solutions for \texorpdfstring{$\theta > \frac{d}{2}$}{}}\label{sec:energy-solutions}

    To introduce energy solutions, we first introduce the crucial energy estimate.
    \begin{definition}
        We say that a process $(u_t)_{t\geq 0}$ with trajectories in $C(\RR_+,\fS'_f(\MM))$ satisfies an \emph{energy estimate} if for all $p\geq 1$ there exists $c_{2p}>0$ (with $c_{2} = 1$) such that for all cylinder functions $F$
        \begin{equation}\label{equ:ito_trick}
        \begin{aligned}
            \bE\left[\sup_{0\leq t\leq T}\left|\int_0^tF(s,u_s)ds\right|^p\right] \lesssim T^{\frac{p}{2}} \sup_{0\leq s\leq T} \|c_{2p}^{\fN}(1 - \fL_\theta)^{-\frac{1}{2}}F(s)\|^p.
        \end{aligned}
        \end{equation}
    \end{definition}
    \begin{lem}\label{lem.ito_trick}
        Let $\theta > 0$ and let $\nu \ll \mu$ be a probability measure with $\eta = \frac{d\nu}{d\mu^M} \in L^2(\mu^M)$. Let $u^{N,M}$ be the solution to the equation \eqref{equ.Stochastic_NS_torus_truncated} with initial condition $\nu$, then $u^{N,M}$ satisfies an energy inequality with implicit constant that does not depend on $N$.
    \end{lem}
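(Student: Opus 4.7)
My plan is an It\^o-trick / Kipnis--Varadhan argument based on the symmetric--antisymmetric splitting $\fL^N = \fL_\theta + \fG^N$ from Theorem~\ref{thm.Geneator_in_Fourier}, followed by a change of measure from the invariant $\mu^M$ to the initial law $\nu$. First I reduce to the stationary case: if $\nu = \mu^M$, Theorem~\ref{thm.solution_truncated} ensures that $u^{N,M}$ is stationary and that $\fL_\theta$ is reversible in $L^2(\mu^M)$. Given a cylinder function $F(s,\cdot)$, I set $G(s,\cdot) := (1-\fL_\theta)^{-1}F(s,\cdot)$ and split
\[
    \int_0^t F(s,u_s)\,ds \;=\; \int_0^t G(s,u_s)\,ds \;-\; \int_0^t \fL_\theta G(s,u_s)\,ds.
\]

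For the second integral I apply It\^o's formula forward to $G(s,u_s)$ under $\fL^N$ and also to the time-reversed process $\hat u_s = u_{t-s}$, whose generator is $\fL_\theta - \fG^N$ by the antisymmetry of $\fG^N$ in $L^2(\mu^M)$. Adding the two identities, the $\fG^N$ contributions cancel and one obtains
\[
    2\int_0^t \fL_\theta G(s,u_s)\,ds \;=\; -\bigl(M_t^{+} + M_t^{-}\bigr) + (\text{lower-order terms from }\partial_s G),
\]
with $M^{\pm}$ forward/backward martingales whose quadratic variations are controlled, via the Dirichlet form of $\fL_\theta$, by
\[
    \EE_{\mu^M}\!\bigl[\langle M^{\pm}\rangle_T\bigr] \;=\; 2T\,\|(-\fL_\theta)^{1/2} G(s,\cdot)\|^2 \;\le\; 2T\,\|(1-\fL_\theta)^{-1/2} F(s,\cdot)\|^2,
\]
uniformly in $N$, because $\fL_\theta$ is $N$-independent while $\fG^N$ dropped out in the cancellation. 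BDG together with stationarity yields the $L^{2p}(\mu^M)$ bound
\[
    \EE_{\mu^M}\!\Big[\sup_{t\le T}\Big|\int_0^t F(s,u_s)\,ds\Big|^{2p}\Big] \;\lesssim\; T^{p}\sup_{s\le T}\|(1-\fL_\theta)^{-1/2} F(s,\cdot)\|_{L^{2p}(\mu^M)}^{2p},
\]
and Nelson's hypercontractivity inequality $\|H\|_{L^{2p}(\mu^M)} \le \|c_{2p}^{\fN} H\|_{L^2(\mu^M)}$ with $c_{2p} = \sqrt{2p-1}$ replaces the $L^{2p}$-norm on the right-hand side by the weighted $L^2$-norm appearing in the statement.

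For a general initial law $\nu$ with $\eta = d\nu/d\mu^M \in L^2(\mu^M)$, a Cauchy--Schwarz step in the density,
\[
    \bE\!\Big[\sup_{t\le T}\Big|\int_0^t F\,ds\Big|^p\Big] \;=\; \EE_{\mu^M}\!\Big[\eta\,\sup_{t\le T}|\,\cdot\,|^p\Big] \;\le\; \|\eta\|_{L^2(\mu^M)}\,\EE_{\mu^M}\!\Big[\sup_{t\le T}|\,\cdot\,|^{2p}\Big]^{1/2},
\]
reduces the claim to the stationary $L^{2p}(\mu^M)$-bound just established, yielding the lemma with $T^{p/2}$-scaling and an implicit constant depending on $\|\eta\|_{L^2(\mu^M)}$ but not on $N$.

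I expect the main technical obstacle to be the explicit time-dependence of $F(s,\cdot)$: the cleanest forward--backward cancellation is for time-independent $F$, and the $\int_0^t G(s,u_s)\,ds$ and $\int_0^t \partial_s G(s,u_s)\,ds$ contributions must be dominated by the uniform-in-$s$ right-hand side. A Riemann-sum approximation $F(s,\cdot)\approx \sum_k F(s_k,\cdot)\mathbf{1}_{[s_k,s_{k+1})}(s)$, applying the stationary bound slab by slab and summing via the triangle inequality and stationarity of the quadratic variation, is the natural route; the $N$-uniformity throughout follows from the fact that $\fL_\theta$ is $N$-free and $\fG^N$ enters only through the antisymmetric part that cancels in the Itô trick.
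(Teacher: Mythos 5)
Your proposal is correct and follows essentially the same route as the paper, whose proof of this lemma is just a citation of the It\^o trick from Section~4.1 of~\cite{gubinelli2020infinitesimal}: forward--backward martingale decomposition in which the antisymmetric $\fG^N$ cancels (hence the $N$-uniformity), Dirichlet-form control of the quadratic variations, Gaussian hypercontractivity to produce the $c_{2p}^{\fN}$ weight, and Cauchy--Schwarz in the density $\eta$ to pass from $\mu^M$ to $\nu$. One small remark: the obstacle you flag at the end is not actually there, since summing the forward and backward It\^o formulas cancels the $\partial_s G$ terms exactly along with $\fG^N$, so no Riemann-sum approximation is needed (and to get the clean $T^{p/2}$ scaling for all $T$ one should resolve with $(T^{-1}-\fL_\theta)^{-1}$ rather than $(1-\fL_\theta)^{-1}$).
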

    \begin{proof} This follows by the It\^o trick using the same arguments as in Section~4.1 of~\cite{gubinelli2020infinitesimal}.
     \end{proof}
    \begin{remark}
        The energy estimate holds also for the case $M=\infty$ if we have the existence of cylinder function martingale solutions of the stochastic Navier-Stokes equation on the whole space.
   \end{remark}

    As a simple consequence of the energy estimate we can make sense of additive functionals with distributional coefficients:
    \begin{lem}
        For any horizon $T$, assume that a process $(u_t)_{t \geq 0}$ satisfies \eqref{equ:ito_trick} and let $I_t(F) = \int_0^t F(u_s)ds $ for all cylinder functions $F$ and $t \in [0,T]$. Then the map $I$ can be uniquely extended as a bounded linear functional from $\fH^{-\frac{1}{2}}_0$ to $L^1(\Omega,\fF,\PP,C[0,T])$.
    \end{lem}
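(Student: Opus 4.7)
The plan is to extend $I$ by density, using the energy estimate with $p=1$ as a quantitative bound.

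First, I would note that cylinder functions are dense in the Fock space $\FK$, and hence also dense in $\fH^{-1/2}_0 = (1-\fL_\theta)^{1/2}\FK$ when this space is equipped with the natural norm $\|F\|_{\fH^{-1/2}_0} := \|(1-\fL_\theta)^{-1/2}F\|_{\FK}$ (since the operator $(1-\fL_\theta)^{-1/2}$ is an isometry between the two spaces by construction). This follows from the chaos decomposition: any element of $\FK$ can be approximated by finite chaos expansions with smooth kernels, and these correspond to cylinder functions.

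Second, I would specialize the energy estimate \eqref{equ:ito_trick} to $p=1$, $c_2=1$, and time-independent $F$, which yields the key inequality
\begin{equation*}
    \bE\left[\sup_{0\leq t\leq T}|I_t(F)|\right] \lesssim T^{\frac{1}{2}}\|(1 - \fL_\theta)^{-\frac{1}{2}}F\|_{\FK} = T^{\frac{1}{2}}\|F\|_{\fH^{-1/2}_0}
\end{equation*}
for every cylinder function $F$. By linearity of $F \mapsto I(F)$, the same estimate applies to differences $I(F) - I(G) = I(F-G)$.

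Third, given $F \in \fH^{-1/2}_0$, I would choose a sequence of cylinder functions $(F_n)$ with $F_n \to F$ in $\fH^{-1/2}_0$. The inequality above then shows that $(I(F_n))$ is Cauchy in $L^1(\Omega,\fF,\PP;C([0,T]))$, a Banach space, so it converges to a limit which I define to be $I(F)$. Standard arguments show the limit is independent of the approximating sequence, linearity is preserved in the limit, and the bound $\bE[\sup_{t\le T}|I_t(F)|] \lesssim T^{1/2}\|F\|_{\fH^{-1/2}_0}$ extends to all $F \in \fH^{-1/2}_0$, giving boundedness. Uniqueness of the extension follows from density of cylinder functions together with the continuity just established.

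I do not anticipate a significant obstacle here, since the lemma is essentially an abstract density/extension argument whose only nontrivial input is the energy estimate of Lemma~\ref{lem.ito_trick}. The mildest point to be careful about is confirming that cylinder functions are indeed dense in $\fH^{-1/2}_0$ in the appropriate norm; this reduces to density in $\FK$, which is a standard fact about Wiener chaos.
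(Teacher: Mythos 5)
Your argument is correct and coincides with the paper's own proof, which likewise applies the energy estimate \eqref{equ:ito_trick} with $p=1$ and invokes the density of cylinder functions in $\fH^{-\frac{1}{2}}_0$; you have merely written out the standard Cauchy-sequence extension step that the paper leaves implicit.
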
 
    \begin{proof}
        We simply apply~\eqref{equ:ito_trick} with $p=1$ and use that cylinder functions are dense in $\fH^{-\frac{1}{2}}_0$.
    \end{proof}
    
    If $\theta > \frac{d}{2}$ and $\zeta=1$, then $\lambda^\zeta_\theta = \frac{d}{4\theta}+\frac12 < 1$ in equations~\eqref{eq:neg} and~\eqref{eq:pos}. Taking $\beta \in (\frac{d}{4\theta}, \frac12)$ and  $\omega(n) = \frac{1}{1+n}$ and using that $\mathcal \fH^\alpha_\gamma \subset \fH^{\alpha'}_{\gamma'}$ for $\alpha\ge \alpha'$ and $\gamma\ge \gamma'$, we obtain
    \begin{equation}\label{equ:neg_exp}
        \tilde{\fG}^N_{-}: \fH^{\frac{1}{2}}_0\rightarrow \fH^{-\frac{1}{2}}_{-1},\qquad \tilde{\fG}^N_{+}: \fH^{\frac{1}{2}}_0\rightarrow \fH^{-\frac{1}{2}}_{-1}.
    \end{equation}
    with bounds on the operator norms that are uniform in $N$. 
    With these  two bounds, we obtain the existence of the integral 
    \[
        \int_0^t \tilde \fL F(u_s) ds:=\lim_{N\rightarrow\infty}\int_0^t (\fL_\theta + \tilde \fG^N) F(u_s)ds,
    \]
    for all $(u_s)$ satisfying the energy estimate \eqref{equ:ito_trick} and for all cylinder functions $F$.
    
    We now introduce two different martingale problems for our equation. 
    \begin{definition}\label{def:cylindrical}
        A process $(u_t)_{t \geq 0}$ with trajectories in $C(\RR_+, \fS'_f(\MM))$ is an \emph{energy solution} for $\tilde \fL$ with initial distribution $\nu$ if $u_0 \sim \nu$ and if the following conditions are satisfied:
        \begin{itemize}
            \item $(u_t)_{t \geq 0}$ is incompressible, i.e. for all $T>0$ there exists $C(T)>0$ with
            \[
                \sup_{0\leq t\leq T} \EE|F(u_t)| \leq C(T)||F||,
            \]
            for all cylinder functions $F$;
            \item $u$ satisfies an energy estimate;
            \item for any cylinder function $F$, the process 
            \[
                M_t^F := F(u_t) - F(u_0) - \int_0^t \tilde \fL F(u_s)ds, \quad t\geq 0,
            \]
            is a continuous martingale with quadratic variation 
            \[
                \langle M_t^F\rangle = \int_0^t \fE(F)(u_s) ds,\quad \text{ with }  \fE(F):= 2\int_{\MM}|A_x^\frac{\theta}{2}D_xF|dx,
            \]
            where $D_x$ is the Malliavin derivative.
        \end{itemize}
    \end{definition}

    For cylinder functions $F$, we saw that $\tilde \fL F \in \fH^{-1/2}_0$ is only a distribution. But we will discuss below that there is a domain $\fD(\tilde \fL)$ such that $\tilde \fL \varphi \in \fH^0_0$ for all $\varphi \in \fD(\tilde \fL)$.
    \begin{definition}\label{def:martingale}
        A process $(u_t)_{t \geq 0}$ with trajectories in $C(\RR_+, \fS'_f(\MM))$ solves the martingale problem for $\tilde \fL$ with initial distribution $\nu$ if $u_0 \sim \nu$, $\operatorname{law}(u_t) \ll \mu^M$ for all $t\geq 0$ and if for all $\varphi \in \fD(\tilde \fL)$ and $t\geq 0$ we have $\int_0^t|\tilde \fL\varphi(u_s)|ds < \infty$ almost surely and the process
        \[
            \varphi(u_t) - \varphi(u_0) - \int_0^t \tilde \fL\varphi(u_s)ds, t\geq 0,
        \]
        is a martingale in the filtration generated by $(u_t)_{t\geq 0}$.
    \end{definition}

    To construct the domain $\fD(\tilde \fL)$, we follow the approach developed in Lukas Gr\"afner's Ph.D. thesis, which is presented in the lecture notes~\cite{grafner2023energy}. We need to show that $\fG$ is a bounded operator from $\fH^{1/2}_1$ to $\fH^{-1/2}_0$ (which follows from \eqref{equ:neg_exp}) and that the commutator $[\fN,\fG]$ is a bounded operator from $\fH^{1/2}_0$ to $\fH^{-1/2}_{-1}$. The commutator estimate follows in a similar way as \eqref{equ:neg_exp}, using $\omega(x) = 1$ this time as well as the fact that $\tilde \fG_+$ and $\tilde \fG_-$ only increase/decrease the chaos by one. Thus, we can apply Theorem~4.11 and Theorem~4.15 in \cite{grafner2023energy} to obtain uniqueness for the two martingale problems and the existence of the domain $(\fD(\tilde{\fL}),\tilde \fL)$ on which $\tilde \fL$ generates a contraction semigroup. Now we are ready to finish the proof of Theorem \ref{thm:exi_uni}.

    \begin{proof}[Proof of Theorem \ref{thm:exi_uni}]
        From Theorem 4.15 in \cite{grafner2023energy}, it remains to show the existence of energy solutions to \eqref{equ.Stochastic_NS_torus_truncated}.  The arguments are similar to Theorem~4.6 in \cite{gubinelli2020infinitesimal} or Theorem~1 in \cite{gubinelli2020hyperviscous}. Thus, we give only a sketch of the proof here.
        \begin{itemize}
            \item[1.] We start with the solution $u^N$ of the martingale problem of $\fL^N$, with the same initial distribution as for $u$. Given a cylinder function $F$ we consider the martingale
            \[
                M_t^F := F(u_t^N) - F(u^N_0) - \int_0^t (\fL_\theta + \tilde \fG^N) F(u_s^N)ds, \qquad t\geq 0,
            \]
            with quadratic variation $\int_0^t\fE(F)(u_s^N)$. 
            We furthermore obtain the equality 
            \[
                ||\omega(\fN)(\fE(F))^{\frac{1}{2}}||^2 = 2||\omega(\fN - 1)(-\fL_\theta)^{\frac{1}{2}}F||^2.
            \]
            \item[2.] From the decomposition 
            \[
                F(u^N_t) - F(u^N_s) = \int_s^t(\fL_\theta + \tilde \fG^N)(u^N_r)dr + M^F_t - M^F_s,
            \]
            we can obtain the estimation 
            \[
                \EE[|F(u^N)_t - F(u^N_s)|^p] \leq (t-s)^{\frac{p}{2}}||c_{4p}^\fN(1-\fL_\theta)^{-\frac{1}{2}}F||^p.
            \]
            Thus, we can apply the Kolomogorov and Mitoma’s criterion \cite{mitoma1983tightness} to get the tightness of the sequence $(u^N)_N$ in $C(\RR_+,\fS'_f)$.
            \item[1.] The incompressibility and energy inequality for $u^N$ follow directly from the Cauchy-Schwarz inequality and Lemma~\ref{lem.ito_trick}, respectively. The implicit constants are independent of $N$, and therefore any weak limit satisfies the same bounds.
            \item[4.] Using the uniform estimates in \eqref{equ:neg_exp}, we obtain that the $L^1$ distance between $\int_0^t(\fL_\theta + \tilde \fG^N)ds$ and $\int_0^t\tilde \fL(u^{N,M}_s)ds$ is $o(1)$ with respect to $N$. Thus, and limit point of $(u^N)_N$ as $N\to\infty$ satisfies also the third point in the definition of an energy solution.
        \end{itemize}
    \end{proof}

    \section{Solution on the whole space for $\theta \le 1$ and $N<\infty$}
    We consider here the truncated equation on the whole space $\RR^d$, i.e. when $M=\infty$, for $\theta \le 1$. To begin with, we derive the limit of the stationary distributions $\mu^M$.
    \begin{prop}
        The periodic extension $\tilde \mu^M$ of the white noise $\mu^M$ to the whole space converges weakly to $\mu$ as $M\rightarrow\infty$ as Gaussian random variables on $\fS'_f(\RR^d)$.
    \end{prop}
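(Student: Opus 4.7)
Both $\tilde\mu^M$ and $\mu$ are centered Gaussian random variables in $\fS_f'(\RR^d)$, so by Minlos/Mitoma it suffices to prove that for every Schwartz vector field $f \in \fS_f(\RR^d)$ the characteristic functional $\EE[\exp(i\tilde\mu^M(f))] = \exp\bigl(-\tfrac12\EE[\tilde\mu^M(f)^2]\bigr)$ converges to $\exp\bigl(-\tfrac12\EE[\mu(f)^2]\bigr)$. By polarization this reduces to convergence of variances. The idea is to compute both sides in Fourier variables and recognize the torus covariance as a Riemann sum for the $\RR^d$ covariance.

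\textbf{Step 1 (Setup).} For $f \in \fS(\RR^d,\RR^d)$ define the periodization
\[
    P_M f(x) = \sum_{m \in M\ZZ^d} f(x+m),
\]
which is a smooth function on $\TT^d_M$ by the Schwartz decay of $f$. The periodic extension $\tilde\mu^M$ is then defined by $\tilde\mu^M(f) := \mu^M(P_M f)$, and the goal becomes to show
\[
    \EE[\mu^M(P_M f)^2] \xrightarrow{M\to\infty} \|\Lp f\|_{L^2(\RR^d)}^2 = \EE[\mu(f)^2].
\]

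\textbf{Step 2 (Mean correction).} By definition of $\mu^M$,
\[
    \EE[\mu^M(P_M f)^2] = \bigl\|\Lp\bigl(P_M f - \fint_{\TT^d_M} P_M f\, dx\bigr)\bigr\|_{L^2(\TT^d_M)}^2.
\]
The mean of the periodization on $\TT^d_M$ equals $M^{-d}\int_{\RR^d} f\,dx = O(M^{-d})$, and a short estimate shows that subtracting it changes the variance by $O(M^{-d})$, which is negligible.

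\textbf{Step 3 (Fourier computation).} A direct computation shows that the Fourier coefficients of $P_M f$ on $\TT^d_M$, indexed by $k \in \ZZ^d_M = (M^{-1}\ZZ)^d$, satisfy $\widehat{P_M f}(k) = \hat f(k)$, where on the right $\hat f$ denotes the Fourier transform of $f$ on $\RR^d$. Parseval's identity on the torus then gives
\[
    \bigl\|\Lp P_M f\bigr\|_{L^2(\TT^d_M)}^2 = \frac{1}{M^d} \sum_{k \in \ZZ^d_M\setminus\{0\}} \bigl|\widehat{\Lp}(k)\hat f(k)\bigr|^2.
\]

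\textbf{Step 4 (Riemann sum).} The right-hand side is a Riemann sum with mesh $1/M$ for
\[
    \int_{\RR^d} \bigl|\widehat{\Lp}(k)\hat f(k)\bigr|^2 dk = \|\Lp f\|_{L^2(\RR^d)}^2.
\]
Since $\hat f$ is Schwartz and $\widehat{\Lp}$ has operator norm bounded by $1$, the summand is dominated by a rapidly decaying function, and the Riemann sum converges to the integral by dominated convergence. The excluded point $k=0$ contributes a term of size $O(M^{-d})$ and can be ignored.

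\textbf{Main obstacle.} There is no essential obstacle: the only two points requiring some care are the treatment of the $k=0$ Fourier mode (the zero-mean correction) and the justification of Riemann sum convergence, both of which are routine consequences of the Schwartz decay of $\hat f$. The uniformity in the parameter $M$ is enough to conclude convergence of the characteristic functionals on every $f \in \fS_f(\RR^d)$, hence weak convergence in $\fS_f'(\RR^d)$.
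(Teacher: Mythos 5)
Your proposal is correct, and it reaches the same reduction as the paper (Gaussianity plus convergence of variances implies weak convergence via the L\'evy/Minlos--Mitoma continuity theorem on $\fS_f'$), but it then computes the variance on the Fourier side rather than in physical space. The paper writes the variance directly as $\int_{[-M/2,M/2]^d}|f_M|^2\,dx - M^{-d}\bigl(\int_{\RR^d} f\,dx\bigr)^2$ with $f_M$ the periodization, and concludes by dominated convergence in $x$; you instead use the Poisson-summation identity $\widehat{P_M f}(k)=\hat f(k)$ for $k\in\ZZ_M^d$ together with Parseval to recognize the torus variance as a mesh-$1/M$ Riemann sum of $|\widehat{\Lp}(k)\hat f(k)|^2$. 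Both arguments are routine given Schwartz decay; the physical-space route is slightly shorter, while your Fourier route makes the role of the Leray multiplier explicit and handles the removed $k=0$ mode and the mean correction transparently (note that since the test functions in $\fS_f$ are already divergence-free, $\Lp f=f$ and the multiplier could be dropped, as the paper does). The only point worth tightening is the phrase ``Riemann sum converges by dominated convergence'': since $\widehat{\Lp}$ is discontinuous at $k=0$, you should say that the summand is continuous away from the origin, bounded, and rapidly decaying, so the Riemann sums converge after discarding an arbitrarily small neighborhood of $0$ whose contribution is uniformly negligible. This is a cosmetic fix, not a gap.
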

    \begin{proof}
        It is sufficient to prove that for each $f \in \fS_f(\RR^d)$, the random variable $\tilde \mu^M(f)$ converges weakly to $\mu(f)$. Since both are centered Gaussian, it suffices to consider the variance:
        \[
            \EE[\tilde{\mu}^M(f)^2] =  \int_{\left[-\frac{M}{2},\frac{M}{2}\right]^d} |f_M|^2 dx - \frac{1}{M^d}\left(\int_{\RR^d} fdx\right)^2,
        \]
        where $f_M(x) = \sum_{k\in\ZZ^d}f(x+kM)$. As $M\rightarrow\infty$, the second term  vanishes and the first term converges to $\|f\|_{\RR^d}^2$ by the dominated convergence theorem. Thus we get the result.
    \end{proof}
    
    To show the existence of weak solution to \eqref{equ.Stochastic_NS_torus_truncated} when $M =\infty$, we need some estimations.
    \begin{lem}\label{lem.u.power_est}
        Let $\theta\le 1$ and consider a weak solution $u^{N,M}$ to the equation \eqref{equ.Stochastic_NS_torus_truncated}. The following bounds hold uniformly in $M >0$: For all $\varphi \in \fS_f(\MM)$
        \begin{equation}
            \bE\left[\sup_{0 \leq t \leq T}\left|\int_0^t u^{N,M}(A^\theta\varphi)ds\right|^p\right] \lesssim T^{\frac{p}{2}}\|\varphi\|^p_{H^\theta},
        \end{equation}
        and 
        \begin{equation}
            \bE\left[\sup_{0\leq t\leq T}\left|\int_0^t B^N(u_s^{N,M})(\varphi)ds\right|^p\right] \lesssim \lambda_N N^{\frac{d-2\theta}{2}}T^{\frac{p}{2}}\|\varphi\|^p_{H^\theta}.
        \end{equation}
    \end{lem}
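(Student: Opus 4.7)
The plan is to apply the It\^o-trick energy estimate of Lemma~\ref{lem.ito_trick} to two different cylinder observables, one for each of the two bounds.

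For the first bound I would take $F(u)=u(A^\theta\varphi)=I_1(A^\theta\varphi)$, which lives entirely in the first Wiener chaos so that $c_{2p}^{\fN}$ acts as a constant. A direct Fourier computation gives
\begin{equation*}
\|(1-\fL_\theta)^{-1/2}I_1(A^\theta\varphi)\|_{L^2(\mu^M)}^2=\frac{1}{M^d}\sum_{k\in\ZZ^d_M}\frac{((2\pi|k|)^{2\theta})^2}{1+(2\pi|k|)^{2\theta}}|\hat\varphi(k)|^2\lesssim\|\varphi\|_{H^\theta}^2,
\end{equation*}
with constants independent of $M$ (the Riemann sums converge to the analogous integral for $\varphi\in\fS_f(\MM)$), and Lemma~\ref{lem.ito_trick} immediately produces the first estimate.

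For the second bound the key identity is $B^N(u)(\varphi)=-\tilde\fG^N I_1(\varphi)(u)=-\tilde\fG^N_+ I_1(\varphi)(u)$, where the first equality is~\eqref{equ.geneator_G_cylindrical} applied to $F=I_1(\varphi)$ and the second uses Theorem~\ref{thm.Geneator_in_Fourier}, which implies $\tilde\fG^N_-\equiv 0$ on $\FK_1$. Hence $F:=B^N(\cdot)(\varphi)$ is a chaos-$2$ observable and by Lemma~\ref{lem.ito_trick} it suffices to bound $\|(1-\fL_\theta)^{-1/2}\tilde\fG^N_+ I_1(\varphi)\|$. I would invoke the third estimate of Theorem~\ref{thm.geneator_estimation}, specialised to $\beta=1$ (admissible since $\theta\le 1$ and $d\ge 3$ give $1<d/(2\theta)$); using $\fG^N_+=\lambda_N\tilde\fG^N_+$ together with $\fN I_1(\varphi)=I_1(\varphi)$ and $(-\fL_\theta)^{1/(2\theta)}I_1(\varphi)=I_1(A^{1/2}\varphi)$, this yields a bound of the form $\lambda_N N^{(d-2\theta)/2}$ times a Sobolev norm of $\varphi$. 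The latter is then controlled by $\|\varphi\|_{H^\theta}$ after absorbing the spectral cutoff $|k|\lesssim N$ coming from the support of $\fR^N$ through the elementary inequality $|k|^{2}\le N^{2(1-\theta)}|k|^{2\theta}$ valid for $\theta\le 1$. Feeding this back into the It\^o trick and applying Doob's inequality to pass to the supremum over $t\in[0,T]$ produces the claimed bound.

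The main obstacle is keeping the implicit constants uniform in $M$. This uniformity hinges on the fact that the generator estimate invoked from Theorem~\ref{thm.geneator_estimation} is explicitly $M$-free in the regime $\theta\le 1$; for $\theta>1$ that proof must exploit the discrete lower bound $|k|\ge 1/M$ (to absorb the low-frequency factor $|k|^{2}$ into $|k|^{2\theta}$), which is lost as $M\to\infty$. This is the sole reason for the assumption $\theta\le 1$ in the statement: everything else in the argument, including the It\^o trick, the chaos decomposition and the identification of $B^N$ with a second-chaos observable, goes through identically in both the periodic and the whole-space setting.
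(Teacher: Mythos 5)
Your proposal follows the paper's proof essentially step for step: both bounds are obtained by feeding the observables $I_1(A^\theta\varphi)$ and $B^N(\cdot)(\varphi)=-\tilde{\fG}^N_+ I_1(\varphi)$ (using that $\tilde{\fG}^N_-$ vanishes on the first chaos) into the energy estimate of Lemma~\ref{lem.ito_trick}, and the nonlinear term is then controlled by the third estimate of Theorem~\ref{thm.geneator_estimation} with $\beta=1$, admissible because $2\theta\le 2<d$ — exactly the paper's argument. One caveat concerns your final norm conversion: the generator estimate outputs $\|(-\fL_\theta)^{1/(2\theta)}\varphi\|$, which on the first chaos is an $H^1$-type norm, and your device $|k|^{2}\le N^{2(1-\theta)}|k|^{2\theta}$ for passing to $\|\varphi\|_{H^\theta}$ costs an extra factor $N^{1-\theta}$ that does not appear in the bound you end up claiming; as written, your chain of inequalities proves $\lambda_N N^{\frac{d-2\theta}{2}}N^{1-\theta}\|\varphi\|_{H^\theta}$ rather than $\lambda_N N^{\frac{d-2\theta}{2}}\|\varphi\|_{H^\theta}$, and the larger exponent would destroy the decay needed for the triviality result when $\theta<1$. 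The paper avoids this by (implicitly) keeping the $H^1$ norm of $\varphi$, which is all that is needed downstream since the test functions are fixed Schwartz functions; you should do the same rather than force the $H^\theta$ norm. Finally, no separate Doob step is required: the supremum over $t\in[0,T]$ is already built into the energy estimate~\eqref{equ:ito_trick}.
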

    \begin{proof}
        From Lemma \ref{lem.ito_trick}, we have 
        \begin{equation*}
                \bE\left[\sup_{0 \leq t \leq T}\left|\int_0^t u^{N,M}(A^\theta\varphi)ds\right|^p\right] \lesssim T^{\frac{p}{2}} \|c_{2p}^{\fN}(\lambda - \fL_\theta)^{-\frac{1}{2}}(A^\theta\varphi)\|^p \lesssim T^{\frac{p}{2}}\|\varphi\|^p_{H^\theta}.
        \end{equation*}
        Notice that for the functional $B^N$, we have 
        \begin{equation}\label{equ.non-linear_geneator}
            B^N(u^{N,M})(\varphi) = \fG^{N}_+\varphi. 
        \end{equation}
        Then, we obtain from Theorem $\ref{thm.geneator_estimation}$
        \begin{equation*}
                \bE\left[\sup_{0\leq t\leq T}\left|\int_0^t B^N(u_s^{N,M})(\varphi)ds\right|^p\right]\lesssim  T^{\frac{p}{2}} \|c_{2p}^{\fN}(\lambda - \fL_\theta)^{-\frac{1}{2}}(\fG_+^{N}\varphi)\|^p \lesssim \lambda_N N^{\frac{d-2\theta}{2}}T^{\frac{p}{2}}\|\varphi\|^p_{H^\theta}.
        \end{equation*}
    \end{proof}

    We are now able to show the existence of solutions.
    \begin{definition}
        Let $\nu\ll\mu$. We call a process $u^N$ with trajectories in $C(\RR_+, \fS_f'(\RR^d))$  a \emph{weak solution} of equation~\eqref{equ.Stochastic_NS_torus_truncated} with initial condition $\nu$ if for all $\varphi \in \fS_f(\RR^d)$
        \[
            u^{N}_t(\varphi) = \nu(\varphi) - \int_0^t u^{N}_s(A^\theta\varphi)ds + \lambda_{N,\theta}\int_0^tB^{N}(u^{N}_s)(\varphi)ds + M_t^{\varphi},
        \]
        where $M^\varphi$ is a continuous martingale with quadratic covariation
        \[
            \EE[M_t^\varphi M_s^{\phi}] = 2(t\wedge s)\langle A^\theta\Lp\varphi,A^\theta\Lp\phi\rangle_{\HH},
        \]
        for all $\phi,\varphi\in\fS_f(\RR^d)$. A weak solution is called an \emph{energy solution} if it is also incompressible and it satisfies an energy estimate, as in Definition~\ref{def:cylindrical} but with respect to $\mu$ instead of $\mu^M$.
    \end{definition}
    \begin{theorem}
        Let $\theta \in (0,1]$ and let $\eta \in \FK$ with $\eta \geq 0$ and $\int\eta d\mu = 1$. There exists a unique in law energy solution to the equation \eqref{equ.Stochastic_NS_torus_truncated} for $M = \infty$ with initial condition $d\nu = \eta d\mu$. 
    \end{theorem}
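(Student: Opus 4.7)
The plan is to obtain existence by passing to the limit $M\to\infty$ in the torus approximations $u^{N,M}$ from Theorem~\ref{thm.solution_truncated}, and to obtain uniqueness in law by applying the energy solution framework of~\cite{grafner2023energy} to the truncated generator $\tilde{\fL} = \fL_\theta + \tilde{\fG}^N$, just as in the proof of Theorem~\ref{thm:exi_uni}.

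For existence I would choose initial densities $\eta^M \in L^2(\mu^M)$ that converge to $\eta$ in the sense of the preceding proposition, for instance by truncating the chaos expansion of $\eta$ to Fourier modes in $\ZZ^d_M$ and renormalising so that $\eta^M \geq 0$, $\int \eta^M d\mu^M = 1$, and $\|\eta^M\|_{L^2(\mu^M)}$ is uniformly bounded by $\|\eta\|_{\FK}$. Periodically extending the resulting solutions $u^{N,M}$ to $\RR^d$, I would establish tightness in $C(\RR_+, \fS_f'(\RR^d))$ via Mitoma's and Kolmogorov's criteria; these reduce to moment bounds on $u^{N,M}_t(\varphi) - u^{N,M}_s(\varphi)$ for $\varphi \in \fS_f(\RR^d)$, which are supplied by Lemma~\ref{lem.u.power_est} whose constants are $M$-independent for $\theta \leq 1$.

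To identify a subsequential limit $u^N$ as a weak solution I would pass to the limit in the martingale formulation of Definition~\ref{def.SNS_weak}. The linear drift, the martingale and its quadratic variation pass to the limit routinely. The nonlinear term is spectrally localised in $|k|\leq N$, so $u \mapsto B^N(u)(\varphi)$ is a continuous polynomial in finitely many smoothed Fourier modes, and the uniform moment bound of Lemma~\ref{lem.u.power_est} allows the passage to the limit. The preceding proposition yields $\tilde\mu^M \rightharpoonup \mu$, which combined with the invariance of $\mu^M$ under $u^{N,M}$ shows that $\mu$ is invariant for any limit point, while incompressibility and the energy estimate survive the limit by Fatou's lemma.

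For uniqueness in law I would invoke~\cite[Theorems~4.11 and~4.15]{grafner2023energy}. For fixed $N$, the non-uniform bound~\eqref{equ.non_uniform_estimation} shows that $\tilde{\fG}^N$ is a bounded antisymmetric operator on a suitable scale of spaces $\fH^\alpha_\beta$, and the commutator $[\fN, \tilde{\fG}^N]$ satisfies the same type of bound because $\tilde{\fG}^N_{\pm}$ shifts the chaos level by exactly one. Together with the symmetry/antisymmetry decomposition of Theorem~\ref{thm.Geneator_in_Fourier}, this verifies the hypotheses to conclude that $\tilde{\fL}$ generates a strongly continuous contraction semigroup on a domain $\fD(\tilde{\fL})$ and that the associated martingale problem is well posed, hence unique in law among energy solutions. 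The main obstacle I anticipate is ensuring that the weak limit $u^N$ is genuinely incompressible with respect to $\mu$ on $\RR^d$ rather than only test-function-wise controlled; I expect this to follow from a Fatou argument combining incompressibility of $u^{N,M}$ with respect to $\mu^M$ and the weak convergence $\tilde\mu^M \rightharpoonup \mu$.
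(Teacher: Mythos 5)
Your existence argument follows the paper's route (uniform-in-$M$ moment bounds from Lemma~\ref{lem.u.power_est} plus BDG, tightness via Kolmogorov/Mitoma, identification of the limit using the spectral localization of $B^N$), and that part is fine. The problem is in the uniqueness step. You propose to verify the hypotheses of \cite[Theorems~4.11, 4.15]{grafner2023energy} using the non-uniform bound~\eqref{equ.non_uniform_estimation}, $\|\tilde{\fG}^N\varphi\|\lesssim_N\|\fN^{3/2}\varphi\|$. This bound loses $3/2$ powers of the number operator, and the Gr\"afner framework requires the drift to cost at most \emph{one} power of $\fN$ (together with half a power of $(1-\fL_\theta)$ on each side, i.e.\ $\tilde\fG^N:\fH^{1/2}_{\gamma+1}\to\fH^{-1/2}_{\gamma}$ and a commutator bound of the same type); otherwise the graded a priori estimates in the uniqueness proof do not close. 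The paper's own remark after Theorem~\ref{thm.geneator_estimation} makes exactly this point: ``to prove well-posedness we need $\fN^\gamma$ with some $\gamma\le 1$, which requires $\theta\le 1$.'' A telltale sign that something is off is that your uniqueness argument never uses the hypothesis $\theta\le 1$: the fixed-$N$ bound~\eqref{equ.non_uniform_estimation} holds for every $\theta$, yet well-posedness on $\RR^d$ is precisely what the authors say they cannot prove for large $\theta$.

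The intended argument is different: one uses the \emph{uniform-in-$M$} estimates~\eqref{eq:neg} and~\eqref{eq:pos}. For $\theta\le 1$ the exponent $1+\tfrac{1-\zeta}{2}(1-\tfrac{1}{\theta\vee 1})$ equals $1$, and the implicit constants are independent of $M$ (this is where $\theta\le1$ enters, via $\sum_i|k_i|^2\le(\sum_i|k_i|^{2\theta})^{1/\theta}$, which fails uniformly in $M$ for $\theta>1$). Since the Fourier-mode expressions of $\fL_\theta$ and $\tilde\fG^N_{\pm}$ from Theorem~\ref{thm.Geneator_in_Fourier} carry over verbatim to $M=\infty$ with sums replaced by integrals, these estimates transfer to the whole space and yield the mapping and commutator bounds with an $\fN^1$ loss that Theorem~4.15 of \cite{grafner2023energy} needs. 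You should replace your appeal to~\eqref{equ.non_uniform_estimation} by this argument; as written, the uniqueness part of your proof does not go through.
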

    \begin{proof}
        The existence proof is basically the same as the proof given in \cite{cannizzaro2021stationary} for the vorticity formulation of the two-dimensional case and very similar to the proof of Theorem~\ref{thm:exi_uni}. From the uniform-in-$M$ estimates in Lemma \ref{lem.u.power_est} and the Burkholder-Davis-Gundy inequality for the martingale, we deduce the tightness of $u^{N,M}$ by Kolomorgov's criterion. Because $u^{N,M}$ are energy solutions, we obtain the energy solution property of $u^N$ directly by taking the limit.
        
        The operator $\fL^N = \fL + \fG^N$ on the whole space has the same expression in Fourier modes as on the torus (see Theorem \ref{thm.Geneator_in_Fourier}), if we now consider frequencies in $\RR^d$. Because $\theta\le 1$, the operator $\mathcal G^N$ satisfies the same bounds as in Theorem~\ref{thm.geneator_estimation}. 
        Uniqueness in law then follows again by Theorem~4.15 in~\cite{grafner2023energy}.
    \end{proof}

    \section{Large-scale behavior for \texorpdfstring{$\theta \leq 1$}{}}\label{sec:large-scale}
    \subsection{Limiting behaviour}
    Now we consider $M=1$ or $M=\infty$, representing the torus or the whole space, $\theta \le 1$, and $N\to \infty$. For $\theta<1$, the limit is trivial:
    \begin{theorem}
        For $\theta < 1$, let $u^N$ be an energy solution to equation \eqref{equ.Stochastic_NS_torus_truncated} with initial distribution $\nu \ll\mu$ and $\frac{d\nu}{d\mu}\in L^2(\mu)$. As $N\to \infty$, $u^N$ converges to the unique-in-law weak solution of
        \begin{equation}\label{equ.stochastic_heat}
            \partial_t u = -(-\Delta)^{\theta} u + (-\Delta)^{\frac{\theta}{2}}\Lp\xi,\qquad u_0 \sim \nu.
        \end{equation}
    \end{theorem}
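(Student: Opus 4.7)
The plan is to proceed in three steps: tightness, passage to the limit in the weak formulation, and uniqueness of the limiting linear equation.

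First I would establish tightness of $(u^N)_N$ in $C(\RR_+, \fS'_f)$. Using the weak formulation from Definition~\ref{def.SNS_weak}, for any $\varphi \in \fS_f(\MM)$ and $0 \le s \le t \le T$, one has
\begin{equation*}
    u^N_t(\varphi) - u^N_s(\varphi) = -\int_s^t u^N_r(A^\theta \varphi)\,dr - \lambda_{N,\theta}\int_s^t B^N(u^N_r)(\varphi)\,dr + (M_t^\varphi - M_s^\varphi).
\end{equation*}
By Lemma~\ref{lem.u.power_est}, the linear drift is controlled by $(t-s)^{p/2}\|\varphi\|_{H^\theta}^p$, and the nonlinear drift is controlled by $\lambda_N N^{(d-2\theta)/2}(t-s)^{p/2}\|\varphi\|_{H^\theta}^p$, where a direct computation shows $\lambda_N N^{(d-2\theta)/2} = N^{\theta-1}$, which is bounded uniformly in $N$ for $\theta \le 1$. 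Combined with the Burkholder--Davis--Gundy inequality for the martingale part, whose bracket is exactly $2(t-s)\|A^{\theta/2}\Lp\varphi\|_\HH^2$, Kolmogorov's criterion yields time-Hölder estimates uniform in $N$, and Mitoma's criterion then gives tightness in $C(\RR_+, \fS'_f)$.

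Next I would identify any subsequential limit $u^\infty$. The linear term $\int_0^t u^N_r(A^\theta\varphi)\,dr$ converges to $\int_0^t u^\infty_r(A^\theta\varphi)\,dr$ by the Skorokhod representation, and the martingale term converges to a continuous martingale with the prescribed quadratic variation by standard martingale CLT arguments. The key point is that the nonlinear term now genuinely vanishes: since $\theta < 1$, the bound $\lambda_N N^{(d-2\theta)/2} = N^{\theta-1} \to 0$ gives
\begin{equation*}
    \bE\left[\sup_{0\leq t\leq T}\left|\lambda_{N,\theta}\int_0^t B^N(u^N_s)(\varphi)\,ds\right|\right] \lesssim N^{\theta-1}T^{1/2}\|\varphi\|_{H^\theta} \longrightarrow 0.
\end{equation*}
Therefore any limit point $u^\infty$ satisfies the weak formulation of the linear equation~\eqref{equ.stochastic_heat}, with initial distribution $\nu$, and the energy estimate passes to the limit since the bound in Lemma~\ref{lem.u.power_est} is uniform in $N$.

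Finally, uniqueness in law of weak solutions to~\eqref{equ.stochastic_heat} is standard: the equation is linear Ornstein--Uhlenbeck type, with explicit mild solution $u^\infty_t = e^{-t(-\Delta)^\theta}u^\infty_0 + \int_0^t e^{-(t-s)(-\Delta)^\theta}(-\Delta)^{\theta/2}\Lp\,dW_s$, and this pathwise formula combined with the initial distribution $\nu$ determines the law uniquely on $C(\RR_+,\fS'_f)$. Since every subsequential limit has the same law, the full sequence $u^N$ converges. The main technical point is ensuring the uniform-in-$N$ control, which reduces entirely to the explicit cancellation $\lambda_{N,\theta}^2 \cdot N^{d-2\theta} = N^{2(\theta-1)}$ coming from the subcritical scaling; for $\theta=1$ this cancellation is exact and no longer provides decay, which is why a nontrivial contribution then survives.
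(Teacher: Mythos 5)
Your proposal is correct and follows essentially the same route as the paper: tightness from the uniform drift and martingale bounds, vanishing of the nonlinearity via the exponent computation $\lambda_{N,\theta} N^{\frac{d-2\theta}{2}} = \lambda N^{\theta-1} \to 0$ for $\theta<1$, and uniqueness of the limiting linear equation by a Duhamel/mild-solution argument (the paper phrases this as testing against the semigroup generated by $-(-\Delta)^\theta$). The only simplification you could add is that the martingale part $M^\varphi_t = \sqrt{2}\,\xi(\1_{[0,t]}\otimes A^{\theta/2}\Lp\varphi)$ does not depend on $N$, so no martingale CLT is needed there.
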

    \begin{proof}
        For any test function $\varphi\in\fS_f(\RR^d)$, the non-linear term is bounded by 
        \[
            \bE\left[\sup_{0\leq t\leq T}\left|\int_0^t B^N(u_s^{N})(\varphi)ds\right|^p\right] \lesssim  N^{\theta - 1}T^{\frac{p}{2}}\|\varphi\|^p_{H^\theta}.
        \]
        Since $\theta<1$,  this term will vanish when taking the limit $N\rightarrow\infty$. For the linear term, notice that we have tightness of $u^N$ by similar arguments as above. Since the linear part of the equation does not depend on $N$, any limit solves~\eqref{equ.stochastic_heat}. Uniqueness in law follows by similar arguments as for energy solutions but with $\tilde \fG=0$, or by testing against the heat kernel generated by $-(-\Delta)^\theta$.
    \end{proof}
    We now turn to the case $\theta = 1$. In this case, we have the following upper bound for the non-linear part:
        \[
            \bE\left[\sup_{0\leq t\leq T}\left|\int_0^t B^N(u_s^{N})(\varphi)ds\right|^p\right] \lesssim  T^{\frac{p}{2}}\|\varphi\|^p_{H^1},
        \]
    from where we can prove tightness as before. Following~\cite{cannizzaro20212d}, the aim is then  to derive a lower bound for the nonlinear part to show that its limit is non-zero.
    \begin{prop}
        Let $\theta = 1$, $\varphi\in\fS_f$, and let $u^N$ be the stationary solution to the equation \eqref{equ.Stochastic_NS_torus_truncated}, there exists a constant $C>0$ such that 
        \[
            \frac{C^{-1}}{\lambda^2}\|\varphi\|^2_{H^1}\leq \int_0^\infty e^{-\lambda t}\bE\left[\left|\int_0^t B^N(u_s^{N})(\varphi)ds\right|^2\right]dt \lesssim  \frac{C}{\lambda^2}\|\varphi\|^2_{H^1}.
        \]
    \end{prop}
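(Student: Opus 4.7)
The plan is to recast the second-moment Laplace transform as a resolvent quadratic form in $L^2(\mu)$ and then obtain matching upper and lower bounds of order $\|\varphi\|^2_{H^1}$ via techniques in the spirit of Kipnis--Varadhan and Olla--Komorowski. By stationarity of $u^N$ under $\mu$, setting $F = B^N(u^N)(\varphi)$ and $G(t) = \EE[|\int_0^t F(u_s^N)ds|^2]$, one has $G(0) = G'(0) = 0$ and $G''(t) = 2\langle F, P_t^N F\rangle_{L^2(\mu)}$ with $P_t^N$ the semigroup of $u^N$, so two integrations by parts yield
$$\int_0^\infty e^{-\lambda t} G(t)\, dt = \frac{2}{\lambda^2}\langle F, (\lambda - \fL^N)^{-1} F\rangle_{L^2(\mu)}.$$
By \eqref{equ.non-linear_geneator}, $F$ lies in the second Wiener chaos with explicit Fourier representation from Theorem~\ref{thm.Geneator_in_Fourier}, and the whole task reduces to bounding this resolvent quadratic form uniformly in $N$ and $\lambda$.

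For the upper bound I would exploit antisymmetry of $\fG^N$: with $u = (\lambda - \fL^N)^{-1} F$, the identity $\langle u, \fG^N u\rangle = 0$ yields $\langle F, u\rangle = \|(\lambda-\fL_\theta)^{1/2} u\|^2$, and Cauchy--Schwarz then gives $\langle F, (\lambda-\fL^N)^{-1} F\rangle \leq \|(\lambda-\fL_\theta)^{-1/2} F\|^2$. By Theorem~\ref{thm.geneator_estimation} applied to $\fG^N_+$ with $\beta = \theta = 1$, this is controlled by $\lambda_N^2 N^{d-2}\|\varphi\|^2_{H^1}$, and the critical scaling $\lambda_N = \lambda N^{(2-d)/2}$ makes it $O(\|\varphi\|^2_{H^1})$ uniformly in $N$.

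For the lower bound I would use an Olla--Komorowski-type variational inequality derived from the expansion $\langle F, \phi\rangle = \langle u, (\lambda - \fL_\theta)\phi + \fG^N\phi\rangle$ and Cauchy--Schwarz:
$$\langle F, (\lambda-\fL^N)^{-1} F\rangle \geq \frac{|\langle F, \phi\rangle|^2}{2\|(\lambda-\fL_\theta)^{1/2}\phi\|^2 + 2\|(\lambda-\fL_\theta)^{-1/2}\fG^N\phi\|^2},$$
valid for any $\phi$. Taking $\phi = (\lambda-\fL_\theta)^{-1} F \in \FK_2$ makes the numerator equal $A^2$ and the first denominator term equal $A$, where $A = \|(\lambda-\fL_\theta)^{-1/2} F\|^2$; the second denominator term decomposes by chaos parity as $\|(\lambda-\fL_\theta)^{-1/2}\fG^N_+\phi\|^2 + \|(\lambda-\fL_\theta)^{-1/2}\fG^N_-\phi\|^2$, each bounded by $\lesssim A$ via Theorem~\ref{thm.geneator_estimation} (with the same critical-scaling cancellation). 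This yields $\langle F, (\lambda-\fL^N)^{-1} F\rangle \gtrsim A$; then the explicit Fourier formula for $A$ via Theorem~\ref{thm.Geneator_in_Fourier}, interpreted as a Riemann sum, gives $A \gtrsim \|\varphi\|^2_{H^1}$: the inner integral $\int|\chi(q)|^4/|q|^2\,dq$ converges precisely for $d \geq 3$, producing the desired nontrivial lower bound.

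The hard part will be verifying the uniform-in-$N$ comparability $\|(\lambda-\fL_\theta)^{-1/2}\fG^N_\pm(\lambda-\fL_\theta)^{-1}F\|^2 \lesssim A$, which is what permits a matching lower bound rather than merely an upper one. This requires the precise scaling at $\theta = 1$: the factor $\lambda_N^2 N^{d-2\theta}$ from Theorem~\ref{thm.geneator_estimation} must cancel to a constant in $N$, which happens only at $\theta = 1$. The same calculation for $\theta < 1$ would yield vanishing cross terms (precisely why the limit is trivial in the preceding theorem), while for $\theta > 1$ they would blow up, mirroring the subcritical/supercritical dichotomy.
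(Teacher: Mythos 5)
Your proposal is correct and follows essentially the same route as the paper: the resolvent representation $\frac{2}{\lambda^2}\langle F,(\lambda-\fL^N)^{-1}F\rangle$, a variational lower bound tested against (a multiple of) $(\lambda-\fL_1)^{-1}\fG^N_+\varphi$, the uniform-in-$N$ bounds $\|(\lambda-\fL_1)^{-1/2}\fG^N_\pm G\|^2\lesssim\|(\lambda-\fL_1)^{1/2}G\|^2$ on the second chaos from Theorem~\ref{thm.geneator_estimation} (with the same $\lambda_N^2N^{d-2}=\lambda^2$ cancellation at $\theta=1$), and the concluding Riemann-sum computation whose limit $\int_{B(0,1)}\tfrac{1}{2|x|^2}dx$ is finite and positive precisely because $d\ge 3$. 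Your quotient-form inequality is just the paper's sup-form variational principle optimized over scalar multiples $G=\delta(\lambda-\fL_1)^{-1}\fG_+^N\varphi$, so the two arguments coincide.
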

    \begin{proof}
        We already showed the upper bound. For the lower bound, we apply Lemma 5.1 in \cite{cannizzaro20212d} to rewrite 
        \begin{equation*}
            \int_0^\infty e^{-\lambda t}\bE\left[\left|\int_0^t B^N(u_s^{N})(\varphi)ds\right|^2\right]dt = \frac{2}{\lambda^2}\EE[B^N(\varphi)(\lambda - \fL^N)^{-1}B^N(\varphi)],
        \end{equation*}
        where $B^N(\varphi) = \fG^N_+\varphi$ lives in the second Wiener chaos. Lemma 5.2 in the same paper gives a variational representation of the right hand side:
        \begin{equation*}
            =\frac{2}{\lambda^2}\sup_{G}\left\{2\langle \fG^N_+\varphi,G\rangle - \langle(\lambda - \fL_1)G,G\rangle - \langle\fG^N G,(\lambda-\fL_1)^{-1}\fG^N G\rangle\right\}.
        \end{equation*}
        For the third term, we divide $\fG^N = \fG^N_- + \fG^N_+$ and use the fact $(\lambda-\fL_1)^{-1}\fG^N_+\varphi$ is in the second Wiener chaos to obtain 
        \[
            = \frac{2}{\lambda^2}\sup_{G}\left\{2\langle \fG^N_+\varphi,G\rangle - \|(\lambda - \fL_1)^{\frac12}G\|^2 - \|(\lambda - \fL_1)^{-\frac{1}{2}}\fG^N_+G\|^2 - \|(\lambda - \fL_1)^{-\frac{1}{2}}\fG^N_-G\|^2\right\}.
        \]
        From now on we restrict to $G$ in the second chaos. Then we have by Theorem \ref{thm.geneator_estimation}
        \begin{equation*}
            \begin{aligned}
                 \|(\lambda - \fL_1)^{-\frac{1}{2}}\fG^N_+G\|^2 + \|(\lambda - \fL_1)^{-\frac{1}{2}}\fG^N_-G\|^2 \le C \|(\lambda-\fL_1)^{\frac{1}{2}}G\|^2, 
            \end{aligned}
        \end{equation*}
        and thus
        \[
            \ge  \frac{2}{\lambda^2}\sup_{G}\left\{2\langle \fG^N_+\varphi,G\rangle - (1+C) \|(\lambda - \fL_1)^{\frac12}G\|^2 \right\}.
        \]
        For 
        \[
            G = \delta(\lambda - \fL_1)^{-1}\fG^N_+\varphi,
        \]
        for $\delta>0$, we thus obtain the lower bound
        \begin{equation}
              \int_0^\infty e^{-\lambda t}\bE\left[\left|\int_0^t B^N(u_s^{N})(\varphi)ds\right|^2\right]dt \geq \frac{2}{\lambda^2}(2\delta - \delta^2(1+C)) \| (\lambda-\fL_1)^{-\frac{1}{2}}\fG^N_+\varphi\|^2, 
        \end{equation}
        and for sufficiently small $\delta>0$ the prefactor on the right hand side becomes positive. It remains to give a lower bound for $\|(\lambda -\fL_1)^{-\frac{1}{2}}\fG^N_+\varphi\|^2$. We have (note that for $M=\infty$ we have to replace all sums by integrals)
        \begin{equation*}
            \begin{aligned}
                \|(\lambda -\fL_1)^{-\frac{1}{2}}\fG^N_+\varphi\|^2 \gtrsim& \frac{\lambda_N^2}{M^{2d}}\sum_{l_1,l_2,k_1,k_2}(\lambda + |k_1|^2+|k_2|^2)^{-1}|\fR_{k_{1},k_2}^N|^2\\
                &\cdot |(k_1+k_2)_{l_2}\hat{\varphi}(l_1,k_1+k_2) + (k_1+k_2)_{l_1}\hat{\varphi}(l_2,k_1+k_2)|^2\\
                \ge &\frac{\lambda_N^2}{M^{2d}}\sum_{k_1,k_2}\frac{|\fR_{k_1,k_2}^N|^2|k_1+k_2|^2|\hat{\varphi}(k_1+k_2)|^2}{\lambda +|k_1|^2 + |k_2|^2}.
            \end{aligned}
        \end{equation*}
        For each $N,k$, define $k_N = \frac{k}{N}$ and 
        \[
            \vartheta^N(k) = \sum_{l+m = k}|\fR_{l,m}^N| \frac{N^{2-d}}{\lambda + |l|^2 + |m|^2},\quad \Theta^N_r(k_N) = \int\frac{\1_{B(0,r)\cap B(k_N,r)}}{\frac{\lambda}{N^2} + |x|^2 + |k_N - x|^2}dx.
        \]
        Then the lower bound can be expressed as 
        \begin{equation*}
            \int_{\ZZ_M^d}\vartheta^N(k)|k|^2|\hat{\varphi}(k)|^2 dk.
        \end{equation*}
        It is easy to check that we have 
        \[
            \Theta^N_1(k_N) \leq \vartheta^N(k) \leq \Theta^N_{1+\frac{1}{N}}(k_N),
        \]
        which shows that 
        \[
            \lim_{N\rightarrow\infty} \vartheta^N(k) = \int_{B(0,1)}\frac{1}{2|x|^2}dx.
        \]
        Then the dominated convergence theorem gives the desired lower bound.
    \end{proof}
    This proves the non-triviality of the non-linear term for the case $\theta = 1$.
    \begin{theorem}
        Let $\theta = 1$ and let $u^N$ be the stationary solution to the equation \eqref{equ.Stochastic_NS_torus_truncated}, then $u^N$ converges weakly to $u$ satisfying 
        \begin{equation}
            u_t(\varphi) = u_0(\varphi) + \int_0^t u(\Delta\varphi) + \fB(\varphi)_t + M_t^\varphi,
        \end{equation}
        where $\fB(\varphi)_t$ is such that
        \[
            \frac{1}{\lambda^2}\|\varphi\|^2_{\fH^1}\lesssim \int_0^\infty e^{-\lambda t}\bE[\left| \fB_t(\varphi)\right|^2]dt \lesssim  \frac{1}{\lambda^2}\|\varphi\|^2_{\fH^1}. 
        \]
    \end{theorem}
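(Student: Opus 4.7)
The plan is to combine the uniform-in-$N$ estimates stated just before the proposition with the lower bound of the proposition itself in a compactness-and-limit-identification argument. Writing
\[
u^N_t(\varphi) = u^N_0(\varphi) + \int_0^t u^N_s(\Delta\varphi)\,ds + \fB^N_t(\varphi) + M^{N,\varphi}_t,
\]
where $\fB^N_t(\varphi)$ is the integrated nonlinear drift and $M^{N,\varphi}_t$ is a continuous martingale with deterministic quadratic variation $2t\|A^{1/2}\Lp\varphi\|_{\HH}^2$ independent of $N$, the $\theta=1$ upper bound $\bE[\sup_{t\le T}|\fB^N_t(\varphi)|^p]\lesssim T^{p/2}\|\varphi\|^p_{H^1}$, the linear estimate of Lemma~\ref{lem.u.power_est}, and the Burkholder-Davis-Gundy inequality together yield H\"older-in-time moment estimates with constants independent of $N$. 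Mitoma's criterion then gives tightness of $(u^N)_N$ in $C(\RR_+,\fS'_f)$ and of each of the four summands separately.

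I would then extract a weakly convergent subsequence (passing to a Skorohod version if convenient) and identify the limiting terms. Since $u^N$ is stationary under $\mu^M$, which is $N$-independent, the initial term is unchanged in law; the linear drift $\int_0^t u^N_s(\Delta\varphi)\,ds$ converges to $\int_0^t u_s(\Delta\varphi)\,ds$; the martingale $M^{N,\varphi}$ converges to a continuous martingale $M^\varphi$ with the same deterministic bracket; and $\fB^N_t(\varphi)$ converges to some process $\fB_t(\varphi)$. Passing to the limit in the displayed decomposition yields the asserted equation for $u$.

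The upper bound on $\fB$ is essentially free: the $p=2$ instance of the uniform moment bound gives $\bE[|\fB^N_t(\varphi)|^2]\lesssim t\|\varphi\|^2_{H^1}$, and integrating against $e^{-\lambda t}$ combined with Fatou along the convergent subsequence produces the asserted $\int_0^\infty e^{-\lambda t}\bE[|\fB_t(\varphi)|^2]\,dt \lesssim \lambda^{-2}\|\varphi\|^2_{H^1}$.

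The main obstacle is transferring the lower bound of the preceding proposition through the limit, since weak convergence together with Fatou natively produces only an upper bound on $\bE[|\fB_t(\varphi)|^2]$. To handle this I would use that the uniform-in-$N$ moment bounds are available for every $p\ge 1$, so the family $(|\fB^N_t(\varphi)|^2)_N$ is uniformly integrable at each fixed $t$; combined with the distributional convergence $\fB^N_t(\varphi)\Rightarrow\fB_t(\varphi)$, this yields pointwise-in-$t$ convergence $\bE[|\fB^N_t(\varphi)|^2]\to\bE[|\fB_t(\varphi)|^2]$. The uniform estimate furnishes an $e^{-\lambda t}$-integrable majorant, so dominated convergence permits exchanging limit and Laplace integral, transporting the lower bound $C^{-1}\lambda^{-2}\|\varphi\|^2_{H^1}$ from $\fB^N$ to $\fB$ and completing the proof.
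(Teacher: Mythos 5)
Your proposal is correct and follows essentially the same route the paper intends: the paper gives no separate proof of this theorem, treating it as an immediate consequence of the preceding tightness discussion and the two-sided Laplace-transform bound of the preceding proposition. Your additional care in transferring the lower bound through the weak limit via uniform integrability (from the uniform $p$-th moment bounds) and dominated convergence is exactly the detail the paper leaves implicit.
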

    Although so far we restricted our attention to dimension $d \geq 3$, the approach also works in dimension 2 and it yields the same conclusion as in \cite{cannizzaro2021stationary}, where the vorticity formulation of the same equation is studied, and triviality is shown for $\theta<1$.

    Next, we discuss possible approaches to describe the limit $\fB(\varphi)$ in the case $\theta=1$. Using the same method in \cite{cannizzaro2021weak}, we are able to determine the weak coupling limit for the two-dimensional stochastic Navier-Stokes equation.
    \begin{theorem}
        Let $d = 2$, $\theta = 1$ and the support of the Fourier transformation of mollifier $\rho^1$ is the indicator function of the unit ball. Let $u^N$ be the stationary solution to the equation
        \begin{equation}\label{equ.Stochastic_NS_torus_truncated_2d}
        \partial_tu^{N} = \Delta u^{N} - \frac{\hat{\lambda}}{\log N}B^N(u^{N}) + \sqrt{2}(-\Delta)^{\frac{1}{2}}\Lp\xi,
    \end{equation}
    on the torus $\TT^2$, then $u^N$ converges weakly to the stationary solution $u$ of
    \[
        \partial_t u = \nu_{\operatorname{eff}}\Delta u + \sqrt{2\nu_{\operatorname{eff}}}(-\Delta)^{\frac{1}{2}}\Lp\xi,
    \]
    where $\nu_{\operatorname{eff}} = \sqrt{1 + \frac{\hat{\lambda}^2}{2\pi}}$.
    \end{theorem}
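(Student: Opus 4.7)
The plan is to adapt the weak-coupling method of \cite{cannizzaro2021weak} to our vector-valued $d=2$ setting, in three steps.

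\textbf{Tightness.} Since $u^N$ is stationary with law $\mu^1$, Lemma~\ref{lem.ito_trick} and Theorem~\ref{thm.geneator_estimation} yield, after inserting the $(\log N)^{-1}$ prefactor and using the explicit two-dimensional identity $\int_{|k|\le N}|k|^{-2}dk = 2\pi\log N$,
\begin{equation*}
    \bE\Big[\sup_{0\le t\le T}\Big|\tfrac{\hat\lambda}{\log N}\int_0^t B^N(u^N_s)(\varphi)\,ds\Big|^p\Big] \lesssim T^{p/2}\|\varphi\|^p_{H^1},
\end{equation*}
uniformly in $N$, so that BDG for the martingale part and the Kolmogorov--Mitoma criterion give tightness of $(u^N)_N$ in $C(\RR_+,\fS'_f(\TT^2))$.

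\textbf{Identification via replacement.} For each $\varphi\in\fS_f(\TT^2)$ I would pass to the limit in
\begin{equation*}
    u^N_t(\varphi) = u^N_0(\varphi) + \int_0^t u^N_s(\Delta\varphi)\,ds - \tfrac{\hat\lambda}{\log N}\int_0^t B^N(u^N_s)(\varphi)\,ds + M^{N,\varphi}_t.
\end{equation*}
The linear drift and initial condition converge trivially. The crux is a replacement lemma showing that the nonlinear term is asymptotically equivalent to an additional Laplacian plus a new Gaussian martingale:
\begin{equation*}
    -\tfrac{\hat\lambda}{\log N}\int_0^t B^N(u^N_s)(\varphi)\,ds \ \xrightarrow{L^2}\ (\nu_{\text{eff}}-1)\int_0^t u_s(\Delta\varphi)\,ds + \widetilde M^\varphi_t,
\end{equation*}
with $\widetilde M^\varphi_t$ orthogonal to the original martingale in such a way that the total quadratic variation becomes $2\nu_{\text{eff}}\|A^{1/2}\Lp\varphi\|^2 t$. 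I would obtain this via the variational representation of Lemma~5.2 of \cite{cannizzaro20212d} applied to the quadratic form
\begin{equation*}
    \mathcal Q^N_\lambda(\varphi) := \tfrac{\hat\lambda^2}{(\log N)^2}\langle \fG^N_+\varphi,(\lambda-\fL^N)^{-1}\fG^N_+\varphi\rangle,
\end{equation*}
whose $N\to\infty$ limit encodes the effective viscosity.

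\textbf{Self-consistency and main obstacle.} The value $\nu_{\text{eff}}=\sqrt{1+\hat\lambda^2/(2\pi)}$ arises from a Dyson-type resummation: expanding $(\lambda-\fL^N)^{-1}$ around $(\lambda-\fL_1)^{-1}$ and restricting to the subspace spanned by the first two Wiener chaoses, the operator $\fG^N_-(\lambda-\fL_1)^{-1}\fG^N_+$ acts on the first chaos as a scalar multiple of $-\Delta$; iterating and summing the resulting geometric series against the two-dimensional Green kernel (which contributes exactly $2\pi\log N$ and is compensated by the $(\log N)^{-2}$ prefactor) produces a self-consistency relation whose positive solution is $\nu_{\text{eff}}$. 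Uniqueness of the limit then follows easily because the limiting SPDE is linear with an explicit Gaussian invariant measure (a rescaled $\mu^1$). The hardest part is to make the resummation rigorous: one needs uniform-in-$N$ operator bounds on $(\lambda-\fL^N)^{-1}$ restricted to the second chaos, together with quantitative control of the contributions coming from chaoses of order $n\ge 3$. I would handle these exactly as in \cite{cannizzaro2021weak}, relying on Theorem~\ref{thm.geneator_estimation} to show that every higher-order term carries an additional negative power of $\log N$ and therefore vanishes in the limit.
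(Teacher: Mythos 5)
Your proposal follows exactly the route the paper takes: the paper's entire proof consists of the remark that the result ``can be deduced directly from the method introduced in \cite{cannizzaro2021weak}'' by inserting the formulas for $\fG^N_{\pm}$ into the replacement lemma of that paper, which is precisely the tightness--replacement-lemma--resummation scheme you outline. Your sketch is in fact more detailed than the paper's one-line argument, and the constant $\frac{1}{2\pi}$ you extract from $\int_{|k|\le N}|k|^{-2}\,dk = 2\pi\log N$ is consistent with the stated $\nu_{\operatorname{eff}}=\sqrt{1+\hat{\lambda}^2/(2\pi)}$.
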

    This can be deduced directly form the method introduced in the paper \cite{cannizzaro2021weak}. We only need to insert our formula for the operators $\fG^N_{\pm}$ into the replacement lemma therein to obtain the effective coefficient. 
    
    To generalize this result to higher dimensions, the method in \cite{cannizzaro2021weak} can no longer be applied due to the lack of control in the both off-diagonal part and diagonal part used for the proof of replacement lemma. We expect that the methods of~\cite{cannizzaro2023gaussian} can be applied to prove the following result:
    \begin{conjecture}\label{conj.limit_behaviour}
        Let $d\geq 3$, $\theta = 1$, $\omega_d = \frac{d\pi^{\frac{d}{2}}}{\Gamma(1+\frac{d}{2})}$ and assume that the support of the Fourier transformation of the mollifier $\rho^1$ is the indicator function of the unit ball. Let $u^N$ be the stationary solution to the  equation \eqref{equ.Stochastic_NS_torus_truncated} on $\TT^d$ or $\RR^d$, then it converges weakly to the stationary solution $u$ of
    \[
        \partial_t u = \nu_{\operatorname{eff}}\Delta u + \sqrt{2\nu_{\operatorname{eff}}}(-\Delta)^{\frac{1}{2}}\Lp\xi,
    \]
    where $\nu_{\operatorname{eff}} = \sqrt{1 +  \frac{\hat{\lambda}^2\omega_d}{4\pi^2(d-2)}}$.
    \end{conjecture}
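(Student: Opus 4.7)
The plan is to adapt the iterative-replacement / renormalized-generator scheme of \cite{cannizzaro2023gaussian} to the Navier-Stokes Fourier calculus of Theorem \ref{thm.Geneator_in_Fourier}. First, tightness of $(u^N)_N$ in $C(\RR_+, \fS'_f)$ follows from Lemma \ref{lem.u.power_est} together with Burkholder--Davis--Gundy and Mitoma's criterion, and any subsequential limit $u$ solves a martingale problem of the form
\begin{equation*}
u_t(\varphi) - u_0(\varphi) - \int_0^t u_s(\Delta\varphi)\,ds = \fB_t(\varphi) + M_t^\varphi,
\end{equation*}
where $\fB_t(\varphi)$ is the limit of $\lambda_{N,1}\int_0^t B^N(u_s^N)(\varphi)\,ds$ from the previous theorem. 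It then suffices to show that $\fB_t(\varphi) = (\nu_{\mathrm{eff}} - 1)\int_0^t u_s(\Delta\varphi)\,ds + M_t^{\mathrm{res}}$ for some martingale $M^{\mathrm{res}}$ with quadratic variation $2(\nu_{\mathrm{eff}}-1)\|A^{1/2}\Lp\varphi\|_\HH^2 t$, since the total martingale $M^\varphi + M^{\mathrm{res}}$ then has quadratic variation $2\nu_{\mathrm{eff}}\|A^{1/2}\Lp\varphi\|_\HH^2 t$ and uniqueness in law of the resulting linear stationary SPDE pins down the limit.

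The core input is the second-order It\^o trick: given $\varphi \in \FK_1$, define $G_\lambda := (\lambda - \fL_1)^{-1}\fG_+^N\varphi \in \FK_2$ and apply It\^o's formula to $G_\lambda(u^N)$ to obtain
\begin{equation*}
\int_0^t \fG_+^N\varphi(u_s^N)\,ds = G_\lambda(u_0^N) - G_\lambda(u_t^N) + \lambda\!\int_0^t\! G_\lambda(u_s^N)\,ds - \int_0^t (\fG_-^N G_\lambda + \fG_+^N G_\lambda)(u_s^N)\,ds + \tilde M_t.
\end{equation*}
Using the Fourier formulas of Theorem \ref{thm.Geneator_in_Fourier}, the diagonal return $\fG_-^N G_\lambda \in \FK_1$ can be computed explicitly. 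The $(2\pi i)^2$ factors from $\tilde\fG_{\pm}^N$ cancel against the $(2\pi)^2(|p|^2 + |q|^2)$ in the resolvent denominator; the prefactor $\lambda_{N,1}^2 = \hat\lambda^2 N^{2-d}$ compensates the $N^{d-2}$ volume produced by the constraint $|p|,|q|\le N$ in $\fR_{p,q}^N$; and after contracting tensorial indices against the Leray projector the limit is proportional to $\hat\lambda^2 \omega_d/(4\pi^2(d-2))\,\Delta\varphi$, with the factor $\omega_d/(d-2)$ arising from $\int_{B(0,1)}|y|^{-2}\,dy$ and the $4\pi^2$ from the Fourier symbol of $\Delta$. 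The cubic remainder $\fG_+^N G_\lambda \in \FK_3$ and the boundary terms $G_\lambda(u_t^N)$ are controlled uniformly in $N$ (and vanish as $\lambda \downarrow 0$) by Theorem \ref{thm.geneator_estimation} with $\theta = 1$, and $\tilde M$ combines with $M^\varphi$ to deliver the enhanced noise intensity.

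A single replacement produces only a first-order-in-$\hat\lambda^2$ correction, so reaching the square root in the conjecture requires iterating the construction with $\fL_1$ in the resolvent replaced at each stage by a renormalized generator of current effective diffusivity $\nu$ and demanding self-consistency; the resulting scalar fixed-point relation has as solution exactly $\nu_{\mathrm{eff}} = \sqrt{1 + \hat\lambda^2 \omega_d/(4\pi^2(d-2))}$. The hard part will be the rigorous execution of this infinite iteration: at each level $n$ one inherits residuals living in higher chaoses $\FK_{2n+1}$ and off-diagonal components in lower chaoses, all of which must be bounded uniformly in $N$ and in $n$ by sharpened number/Laplacian Sobolev estimates refining Theorem \ref{thm.geneator_estimation}, together with the tensorial bookkeeping needed to keep the Leray projector tracked correctly along the iteration. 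This is precisely the content of the ``truncated generator'' framework of \cite{cannizzaro2023gaussian} for the 2d stochastic Burgers equation, and transferring it to the Leray-projected, vector-valued Navier-Stokes setting in $d \ge 3$ is the principal remaining task.
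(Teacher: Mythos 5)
This statement is stated in the paper as a \emph{conjecture}, not a theorem: the authors explicitly write that they ``expect that the methods of [Cannizzaro--Gubinelli--Toninelli] can be applied to prove'' it, and they give no proof. Your proposal is, in substance, the same program the authors have in mind — tightness plus the decomposition $u_t(\varphi)=u_0(\varphi)+\int_0^t u_s(\Delta\varphi)\,ds+\fB_t(\varphi)+M_t^\varphi$ from the preceding theorem, a second-order It\^o trick with $G_\lambda=(\lambda-\fL_1)^{-1}\fG_+^N\varphi$, and an iterated/self-consistent replacement to produce the square root. Your bookkeeping of the constant is consistent with the paper's own computation (the limit $\vartheta^N(k)\to\int_{B(0,1)}\tfrac{1}{2|x|^2}dx=\tfrac{\omega_d}{2(d-2)}$ appears in the lower-bound proposition). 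So as a roadmap it is aligned with the source.

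However, it is not a proof, and the gap is exactly the one you name and then set aside. A single application of the It\^o trick only identifies the first-order (in $\hat\lambda^2$) diagonal contribution; the square root requires controlling the full hierarchy of corrections, i.e.\ bounding the residuals in $\FK_{2n+1}$ and the off-diagonal returns uniformly in $N$ \emph{and} in the iteration depth $n$, and showing they vanish in the limit. The paper itself points out that the replacement lemma of the weak-coupling approach fails for $d\ge 3$ ``due to the lack of control in both the off-diagonal part and the diagonal part,'' which is precisely why the statement is left as a conjecture rather than proved. Your proposal does not supply the sharpened estimates (beyond Theorem \ref{thm.geneator_estimation}) that would close this, nor does it verify that the truncated-generator machinery of the $2d$ Burgers setting survives the passage to the vector-valued, Leray-projected equation in $d\ge 3$. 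As it stands, the proposal restates the open program rather than resolving it; the statement should be regarded as unproven by both the paper and your argument.
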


    \subsection{Implication for fluctuating hydrodynamics}
    We revisit the fluctuating hydrodynamics equations introduced by Landau and Lifshitz in \cite{landau2013fluid}, which are given by
    \[
        \partial_t u = \nu\Delta u - \nabla p -\hat{\lambda} \operatorname{div}(u\otimes u) +  \nabla\cdot\tau, \qquad \nabla \cdot u = 0,
    \]
    where the centered Gaussian noise $\tau$ has covariance
    \begin{equation}
    \EE[\tau_{ij}(t,x)\tau_{kl}(t',x')] = \frac{2\nu k_B T}{\rho}(\delta_{ik}\delta_{jl}+\delta_{il}\delta_{jk} - \frac{2}{3}\delta_{ij}\delta_{kl})\delta(x-x')\delta(t-t').
\end{equation} 
Let us compute the covariance of $(\nabla\cdot\tau)_i$ for $i = 1,\dots,d$. We have 
\begin{equation*}
    \begin{aligned}
    &\EE\left[\sum_{j,k}\langle\partial_j\tau_{i_1,j},\varphi_1\rangle\langle\partial_k\tau_{i_2,k},\varphi_2\rangle\right] = \EE\left[\sum_{j,k}\langle\tau_{i_1,j},\partial_j\varphi_1\rangle\langle\tau_{i_2,k},\partial_k\varphi_2\rangle\right] \\
    =&\int \sum_{j,k} \frac{2\nu k_B T}{\rho}\left(\delta_{i_1i_2}\delta_{jk}+\delta_{i_1k}\delta_{ji_2} - \frac{2}{3}\delta_{i_1j}\delta_{i_2k}\right)\partial_j\varphi_1(t,x)\partial_k\varphi_2(t,x)dtdx.
    \end{aligned}
\end{equation*}
So if $i_1 \neq i_2$, we have   
\begin{equation*}
    \begin{aligned}
    &\EE\left[\sum_{j,k}\langle\partial_j\tau_{i_1,j},\varphi_1\rangle\langle\partial_k\tau_{i_2,k},\varphi_2\rangle\right] \\
    =& \int \frac{2\nu k_BT}{\rho}\left(\partial_{i_2}\varphi_1\partial_{i_1}\varphi_2 - \frac{2}{3}\partial_{i_1}\varphi\partial_{i_2}\varphi\right) dtdx \\
    =& \int \frac{2\nu k_BT}{3\rho}\partial_{i_1}\varphi_1\partial_{i_2}\varphi_2 dtdx \\
    =& \frac{2\nu k_BT}{3\rho}\EE[\langle\partial_{i_1}\tilde{\xi},\varphi_1\rangle\langle\partial_{i_2}\tilde{\xi},\varphi_2\rangle],
    \end{aligned}
\end{equation*}
where $\tilde{\xi}$ is a real-valued white noise on $\RR_+\times \RR^d$. When $i_1 = i_2$, we have 
\begin{equation*}
    \begin{aligned}
    &\EE\left[\sum_{j,k}\langle\partial_j\tau_{i_1,j},\varphi_1\rangle\langle\partial_k\tau_{i_1,k},\varphi_2\rangle\right] \\
    =& \int \frac{2\nu k_BT}{\rho}\left(\sum_j\partial_j\varphi_1\partial_j\varphi_1 + \frac{1}{3}\partial_{i_1}\varphi_1\partial_{i_1}\varphi_2\right) dtdx \\
    =& \frac{2\nu k_BT}{\rho}\EE[\langle(-\Delta)^{\frac{1}{2}}\xi_{i_1},\varphi_1\rangle\langle(-\Delta)^{\frac{1}{2}}\xi_{i_1},\varphi_2\rangle+\frac{1}{3}\langle\partial_{i_1}\tilde{\xi},\varphi_1\rangle\langle\partial_{i_1}\tilde{\xi},\varphi_2\rangle],
    \end{aligned}
\end{equation*}
where $\xi$ is a $d$-dimensional white noise on $\RR_+\times\RR^d$ that is independent of $\tilde \xi$.
Thus, we conclude that 
\begin{equation}
    \nabla\cdot\tau \stackrel{d}{=}\sqrt{\frac{2\nu k_BT}{\rho}}(-\Delta)^{\frac{1}{2}}\xi + \sqrt{\frac{2\nu k_BT}{3\rho}}\nabla\cdot\tilde{\xi}.
\end{equation}
Applying the Leray projection, we find that 
\[
    \Lp \nabla\cdot \tau \stackrel{d}{=} 
    \sqrt{\frac{2\nu k_BT}{\rho}}(-\Delta)^{\frac{1}{2}}\Lp\xi.
\]
Thus, the solution to the equation of fluctuating hydrodynamics should have the same distribution as the solution to the equation 
\[
    \partial_t u = \nu\Delta u - \nabla p -\hat{\lambda} \operatorname{div}(u\otimes u) +  \sqrt{\frac{2\nu k_BT}{\rho}}(-\Delta)^{\frac{1}{2}}\xi,\qquad \nabla \cdot u = 0,
\]
but of course we have to truncate both equations to make sense of them. Before that, we apply a scaling transformation to normalize the coefficients: Let $\tilde{u}(t,x) = Au(rt,x)$ with 
\[
    A = \sqrt{\frac{\rho}{k_B T}},\qquad r =\frac{1}{\nu}.
\]
The equation for $\tilde u$ becomes
\[
    \partial_t \tilde{u} = \Delta \tilde{u} - \nabla \tilde{p} -\hat{\lambda}\sqrt{\frac{k_BT}{\rho\nu^2}} \operatorname{div}(\tilde{u}\otimes \tilde{u}) +  \sqrt{2}(-\Delta)^{\frac{1}{2}}\xi,\qquad \nabla \cdot \tilde{u} = 0,
\]
and we consider the same truncation as before with $\rho^1$ decribed in the Conjecture~\ref{conj.limit_behaviour} (note that we did not rescale space yet and therefore the truncation appears in the same form in the equation for $u$):
\[
    \partial_t \tilde{u} = \Delta \tilde{u} - \nabla \tilde{p} -\hat{\lambda}\sqrt{\frac{k_BT}{\rho\nu^2}} \rho^1* \operatorname{div}(\rho^1*\tilde{u}\otimes \rho^1*\tilde{u}) +  \sqrt{2}(-\Delta)^{\frac{1}{2}}\xi,\qquad \nabla \cdot \tilde{u} = 0.
\]
Thus, Conjecture~\ref{conj.limit_behaviour} says that by considering the scaling 
\[
    \hat{u}^N(t,x) = N^{\frac{d}{2}}\tilde{u}(N^2t,Nx),
\]
the limit $u$ of $\frac{1}{A}\hat{u}^N(\frac{t}{r},x)$ will satisfy the equation 
\begin{align*}
    \partial_t u & = \frac{G(\hat{\lambda})}{r}\Delta u + \frac{\sqrt{2G(\hat{\lambda})}}{Ar^{\frac{1}{2}}}\Lp(-\Delta)^{\frac{1}{2}}\xi \\
    & = G(\hat \lambda) \nu \Delta u + \sqrt{G(\hat \lambda)}\sqrt{\frac{2\nu k_B T}{\rho}} \Lp(-\Delta)^{\frac{1}{2}}\xi,
\end{align*}
where 
\[
    G(\hat{\lambda}) = \sqrt{1 + \frac{\hat{\lambda}^2k_BT\omega_d}{4\nu^2\rho\pi^2(d-2)}}.
\]
    
\paragraph{Acknowledgement.} We are grateful to Ana Djurdjevac for helpful discussions on fluctuating hydrodynamics, and to Lukas Gr\"afner for helpful discussions on energy solutions. We acknowledge funding by DFG through IRTG 2544 ``Stochastic Analysis in Interaction''.
    
    \bibliographystyle{amsalpha}
	\bibliography{Triviality}
\end{document}